\newtheorem{Theorem}{Theorem}[section]
\newtheorem{Lemma}[Theorem]{Lemma}
\theoremstyle{definition}
\def\nn{\mathbf{n}}
\def\DD{\mathbf{D}}
\def\zz{\mathbf{z}}
\def\uu{\mathbf{u}}
\newcommand\T{\rule{0pt}{4ex}}
\newcommand\B{\rule[-3ex]{0pt}{0pt}}
\begin{document}

\title[Difference Conservation Law Characteristics]{Characteristics of Conservation Laws for Difference Equations}

\author{Timothy J. Grant}
\address{Department of Mathematics\\ University of Surrey\\ Guildford\\ GU2 7XH\\ UK}
\curraddr{Schlumberger Gould Research\\ High Cross\\ Madingley Road\\ Cambridge\\ CB3 0EL\\ UK}
\curraddr{British Antarctic Survey\\ High Cross\\ Madingley Road\\ Cambridge\\ CB3  0ET\\ UK}
\email{tgrant@slb.com}

\author{ Peter E. Hydon} 
\address{Department of Mathematics\\ University of Surrey\\ Guildford\\ GU2 7XH\\ UK}
\email{p.hydon@surrey.ac.uk}
\keywords{Difference equations, conservation laws, Noether's Theorem}
\subjclass[2010]{39A14, 37K05, 12H10}
\dedicatory{Communicated by Evelyne Hubert}
\date{\today}

\begin{abstract}
Each conservation law of a given partial differential equation is determined (up to equivalence) by a function known as the characteristic. This function is used to find conservation laws, to prove equivalence between conservation laws, and to prove the converse of Noether's Theorem.  Transferring these results to difference equations is nontrivial, largely because difference operators are not derivations and do not obey the chain rule for derivatives.  We show how these problems may be resolved and illustrate various uses of the characteristic. In particular, we establish the converse of Noether's Theorem for difference equations, we show (without taking a continuum limit) that the conservation laws in the infinite family generated by Rasin and Schiff are distinct, and we obtain all five-point conservation laws for the potential Lotka--Volterra equation.
\end{abstract}

\maketitle

\section{Introduction}
Current research in symmetry methods owes a tremendous debt to  Peter Olver. In particular, his remarkable text, ``Applications of Lie Groups to Differential Equations," remains pre-eminent after more than a quarter of a century. It is a masterpiece of scholarship that is notable for  the lucidity of its exposition and the precision of its proofs. The first (1986) edition was the first text to describe the conditions under which the converse of Noether's Theorem  \cite{Noether} holds. A cornerstone of this result is the proof that, for any system of partial differential equations (PDEs) in Kovalevskaya form that is locally analytic, there is a bijection between equivalence classes of conservation laws and equivalence classes of characteristics. A simpler proof of this result, due to Alonso \cite{Alonso}, is incorporated in the second edition  \cite{Olver} of Olver's text.

Here is a summary of the main definitions for scalar PDEs\footnote{For simplicity, we restrict attention to scalar equations throughout this paper; the corresponding results for systems are contained in the first author's PhD thesis \cite{Grant11}.}.  For a given PDE, $\Delta=0$, a \textit{conservation law} (CLaw) is a divergence expression that vanishes on solutions of the equation, so that
\begin{equation*}
 \mbox{Div} \mathbf{F}=0 \mbox{ when }\Delta=0.
\end{equation*}
A CLaw is \textit{trivial of the first kind} if $\mathbf{F}$ vanishes on solutions of the PDE; it is \textit{trivial of the second kind} if $\mbox{Div} \mathbf{F}\equiv0$. A CLaw is \textit{trivial} if it is a linear combination of the two kinds of trivial CLaws. Two CLaws are \textit{equivalent} if and only if they differ by a trivial CLaw.  If the PDE is totally nondegenerate (see \cite{Olver})  -- for instance, if it is in Kovalevskaya form -- the CLaw can be integrated by parts to find an equivalent CLaw in \textit{characteristic form}, that is, with
\begin{equation}
 \mbox{Div} \mathbf{\tilde F}=Q\Delta.\label{charcont}
\end{equation}
The multiplier $Q$ is called the \textit{characteristic} of the CLaw.

For example, the KdV equation,
\begin{equation*}
\Delta \equiv u_t+uu_x+u_{xxx}=0,
\end{equation*}
has a CLaw with
\begin{align}
 \mbox{Div} \mathbf{F}=&D_t\left(\frac{1}{3}u^3-u_x^2\right)+D_x\left(\frac{1}{4}u^4+u^2u_{xx}-2u_xu_{xxx}+u_{xx}^2-2u_x^2u\right)\nonumber\\
=&\left(u^2-2u_xD_x\right)\Delta.\label{CLawKdV}
\end{align}
Integration by parts yields the characteristic form of (\ref{CLawKdV}) :
\begin{equation*}
\mbox{Div}\mathbf{\tilde F}=\left(u^2+2u_{xx}\right)\Delta.
\end{equation*}
So $Q=\left(u^2+2u_{xx}\right)$ is the characteristic and (\ref{CLawKdV}) is equivalent to the CLaw
\begin{equation}
 D_t\left(\frac{1}{3}u^3-u_x^2\right)+D_x\left(\frac{1}{4}u^4+u^2u_{xx}+2u_xu_t+u_{xx}^2\right)=0.
\end{equation}

A characteristic is said to be \textit{trivial} if it vanishes on solutions of the PDE. By definition, the set of characteristics is a vector space; two characteristics are equivalent if they differ by a trivial characteristic. Therefore, the correspondence between equivalence classes of characteristics and CLaws makes it easy to identify when two seemingly different CLaws are equivalent: one only needs to compare their characteristics. Given a nontrivial characteristic, it is usually easy to reconstruct a corresponding CLaw by inspection; this can also be achieved systematically with the aid of a homotopy operator. In particular, where  Noether's Theorem applies, each characteristic that arises from a one-parameter (local) Lie group of variational symmetries can be used to construct an associated CLaw. 

Until now, Alonso's result has not been transferred to difference equations. Yet one might wish to approximate a given PDE by a finite difference scheme that preserves difference analogues of several CLaws, particularly those that have a clear physical interpretation\footnote{This is one of the oldest branches of geometric integration but, by exploiting the growing power of computer algebra systems, some new strategies for doing this have been developed recently  \cite{Grant11}.}.  This raises the question: is there a function that characterizes each equivalence class of \textit{difference} CLaws? 

Of course, a given difference equation may be interesting in its own right, whether or not it is an approximation to a differential equation. Recent work has shown that CLaws of partial difference equations (P$\Delta$Es) have many features in common with CLaws of PDEs. For instance, Dorodnitsyn \cite{Dorodnitsyn1993,Dorodnitsyn2001} has formulated a finite difference analogue of Noether's theorem. Hydon \& Mansfield \cite{HyM11} studied variational problems whose symmetry generators constitute an infinite-dimensional Lie algebra and derived a difference analogue of Noether's Second Theorem. CLaws of  a given P$\Delta$E can be found directly (whether or not the P$\Delta$E is an Euler--Lagrange equation); see \cite{HydonCLawsPDiffE} for an algorithmic approach that works for any P$\Delta$E in Kovalevskaya form (see below) and see \cite{Rasin, Rasin2006, Rasin2007} for applications of this approach to integrable quad-graph equations. The main shortcoming of the direct construction method is that the algebraic complexity of the computations grows exponentially with the order of the CLaw; in practice, therefore, the method is restricted to low-order CLaws.

Given a difference equation, $\Delta=0$, one cannot obtain the characteristics of its CLaw in the same way one does as for a PDE.  For PDEs, the chain rule ensures that each CLaw is linear in the highest-order derivatives through which the dependence on $\Delta$ occurs.  Integration by parts is then used to find the characteristic.  The analogue of integration by parts for difference equations is summation by parts.  However there is no analogue of the chain rule and therefore CLaws typically depend nonlinearly on $\Delta$ and its shifts. Consequently, it is not possible to construct a characteristic merely by summing by parts. In the current paper, we show how this difficulty can be surmounted.

We also derive and use a difference analogue of Alonso's result. By pulling the characteristic back to a specified set of initial conditions, one can determine a function (the root) which labels the distinct equivalence classes of conservation laws. We show how the root is calculated in practice; examples include an integrable P$\Delta$E with infinitely many CLaws. It is well-known that integrable PDEs have infinite hierarchies of CLaws which can be found using recursion operators, mastersymmetries, or Gardner transformations. Recently, Mikhailov and co-workers \cite{XenitidisCLaws10,XenitidisCoSym10} and Rasin and co-workers \cite{Rasin2010,RasinSchiff09} have shown that the same is true for integrable quad-graph equations. Having used the Gardner transformation to construct such a hierarchy, Rasin \& Schiff \cite{RasinSchiff09} took a continuum limit in order to show that these CLaws are distinct. By using the difference analogue of Alonso's result, we show how to determine directly when CLaws are distinct, irrespective of whether they are preserved in any continuum limit or whether the underlying P$\Delta$E is integrable.

To establish the necessary results, it is helpful to begin by looking at scalar O$\Delta$Es (\S\ref{secOdiffE}).  We define a characteristic of a first integral and show that the characteristic is trivial if and only if the first integral is trivial.  In \S \ref{secPdiffE}, the definition of a characteristic is extended to CLaws of P$\Delta$Es; we prove that there is a bijection between equivalence classes of CLaws and characteristics. This result has several immediate applications. In \S\ref{secGardner}, we use the characteristic to show that the CLaws in the infinite hierarchy for dpKdV generated by the Gardner transformation in \cite{RasinSchiff09} are distinct.  Perhaps the most fundamental application is the establishment of the converse of Noether's Theorem (\S\ref{secNoet}). Consequently, for Euler--Lagrange equations in Kovalevskaya form, there is a bijection between equivalence classes of variational symmetries and CLaws.  Finally, we show how to use the characteristic to find CLaws of a given P$\Delta$E (the potential Lotka-Volterra equation); this provides an alternative to the direct method.

\section{Scalar ordinary difference equations}\label{secOdiffE}

Although the main focus of this paper is on P$\Delta$Es, it is instructive to look at scalar O$\Delta$Es first. The independent variable is  $n \in \mathbb{Z}$ and the dependent variable is $u\in\mathbb{R}$. It is convenient to regard $n$ as a free variable and to denote the shifts of $u$ from a fixed but unspecified $n$ by $u_i:=u(n+i)$. In order to evaluate first integrals on solutions of the O$\Delta$E, one must be able to eliminate the highest (or lowest) shift of $u$. Therefore, we restrict attention to \textit{explicit} $K^{\text{th}}$-order O$\Delta$Es, which are of the form
\begin{equation}
 \Delta:= u_K-\gamma(n,u_0,\hdots,u_{K-1})=0, \qquad \frac{\partial \gamma}{\partial u_{0}} \neq 0, \label{ODiffE2}.
\end{equation}
The set of \textit{initial conditions} is the set of values $\mathbf{z}=\{ n,u_0,\hdots,u_{K-1}\}$ from which all  $u_i$,  $i\ge K$, can be calculated.

A first integral of (\ref{ODiffE2}) is a non-constant function, $\phi(n,u_0,\hdots,u_{K-1})$, that is constant on solutions. It is helpful to introduce the forward shift operator, $S_n$, and the identity operator, $I$, which are defined by
\[
S_n:(n,f(n),u_i)\mapsto (n+1,f(n+1),u_{i+1}),\qquad I:(n,f(n),u_i)\mapsto (n,f(n),u_i);
\]
here $f$ is any function that is defined at $n$ and $n+1$.
In terms of these operators, $\phi$ is constant on solutions if and only if the following difference CLaw holds:
\begin{equation}
 (S_n-I)\phi=0 \qquad \mbox{when } \Delta=0.\label{trivint}
\end{equation}
 It is useful to refer to constant solutions of \eqref{trivint} as trivial first integrals, by analogy with trivial CLaws of the second kind. (Trivial CLaws of the first kind cannot occur when $\phi$ depends only on $n,u_0,\hdots,u_{K-1}$.)

A nontrivial first integral must depend on $u_{K-1}$, otherwise  $(S_n-I)\phi$ does not depend on $\Delta$ (in which case, the only way for $\phi$ to be a first integral is to be identically constant). Therefore, the CLaw can be written as
\begin{equation}
 C(\mathbf{z},\Delta):=\phi(n\!+\!1,u_1,\hdots,u_{K-1},\Delta+\gamma(n,u_0,\hdots,u_{K-1}))-\phi(n,u_0,\hdots,u_{K-1}),\label{ODECLaw}
\end{equation}
where $C(\mathbf{z},0)=0$.  We now use the Fundamental Theorem of Calculus to write the CLaw in the form
\begin{align*}
 C(\mathbf{z},\Delta)=&\int_{\lambda=0}^1 \frac{d}{d\lambda} C(\mathbf{z},\lambda\Delta) \, d\lambda\\
=&\Delta \int_{\lambda=0}^1 \phi_{,K}(n+1,u_1,\hdots,u_{K-1},\lambda\Delta+\gamma(n,u_0,\hdots,u_{K-1})) \, d\lambda.
\end{align*}
(Throughout this paper, the partial derivative of a function, $f$, with respect to its $i^{\text{th}}$ \textit{continuous} argument is denoted by $f_{,i}$). 
By analogy with differential equations,  we define the characteristic to be the multiplier
\begin{equation}
 Q(\mathbf{z},\Delta):=\int_{\lambda=0}^1 \frac{\partial C(\mathbf{z},\lambda\Delta)}{\partial \lambda \Delta} \, d\lambda
=\frac{(S_n\phi)|_{u_K=\Delta+\gamma} - (S_n\phi)|_{u_K=\gamma} }{\Delta }\,.\label{ODELTAQ}
\end{equation}
As with  differential equations, a trivial characteristic is one that vanishes on solutions, so that $Q(\mathbf{z},0)=0$.

A trivial first integral is a constant, so $C(\mathbf{z},\Delta)\equiv0$; therefore any trivial first integral has a trivial characteristic.
To show that any trivial characteristic corresponds to a trivial first integral, it is helpful to define the \textit{root} of the characteristic to be the function
\begin{align}
 \overline{Q} (\zz) := \lim_{\mu \to 0 } Q(\mathbf{z},\mu\Delta) 
=&   \lim_{\mu \to 0} \frac{(S_n\phi)|_{u_K=\mu\Delta+\gamma} - (S_n\phi)|_{u_K=\gamma} }{\mu\Delta }  \nonumber \\ 
=&\,\phi_{,K}(n+1,u_1,\hdots,u_{K-1},\gamma(n,u_0,\hdots,u_{K-1}));\label{Qzero}
\end{align}
to do this, we require that $S_n \phi$ is differentiable in its $K^{\text{th}}$ continuous argument at $u_K=\gamma$.
If the characteristic is trivial, its root is zero, so
\begin{equation}
 0=\phi_{,K}(n+1,u_{1},\hdots,u_{K-1},\gamma(n,u_{0},\hdots,u_{K-1})).\label{eqn1}
\end{equation}
As $\gamma_{,1} \neq 0$, there is only one way for (\ref{eqn1}) to be satisfied identically in $u_0$: the CLaw \eqref{ODECLaw} cannot depend on $\Delta$, so $\phi$ must be a trivial first integral.

Having dealt with the question of triviality, we now show how to reconstruct the first integral from the root.  For clarity, we begin with a second-order example, but the same procedure applies in general.
The O$\Delta$E
\begin{equation}
 \Delta:= u_{2}-\gamma(n,u_{0},u_{1})=0, \qquad \gamma(n,u_{0},u_{1})=\frac{1}{4}\left(u_1+5u_0+3\left\{1+(u_1+u_0)^2\right\}^{1/2}\right),\label{exampleOdiffE}
\end{equation}
has a first integral
\begin{equation}
 \phi(n,u_0,u_1)=2^{-n}\left(u_1+u_0+\left\{1+(u_1+u_0)^2\right\}^{1/2}\right).\label{firstintode}
\end{equation}
 Consequently, the characteristic is
\[
Q(\mathbf{z},\Delta)=2^{-(n+1)}\left(\Delta+\left\{1+(\Delta+\gamma(n,u_{0},u_{1})+u_1)^2\right\}^{1/2}-\left\{1+(\gamma(n,u_{0},u_{1})+u_1)^2\right\}^{1/2}\right)/\Delta,
\]
and so the root is
\begin{align}
 \overline{Q} (\zz) &=2^{-(n+1)}\left(1+\big\{\gamma(n,u_{0},u_{1})+u_1\big\}\left\{1+(\gamma(n,u_{0},u_{1})+u_1)^2\right\}^{-1/2}\right)\label{ex1q1}\\
&=\frac{2^{2-n}\left(5+2(u_1+u_0)^2+2(u_1+u_0)\left\{1+(u_1+u_0)^2\right\}^{1/2}\right)}{25+16(u_1+u_0)^2}\,.\label{ex1q2}
\end{align}

To reconstruct the first integral from the root, one must reverse this process. First use the  O$\Delta$E (\ref{exampleOdiffE}) to eliminate $u_0$ from (\ref{ex1q2}), obtaining (\ref{ex1q1}). Now treat $u_1$ and $u_2$ as the continuous variables in (\ref{Qzero}), which amounts to
\begin{equation*}
 \frac{\partial}{\partial u_2}\phi(n+1,u_1,u_2)= 2^{-(n+1)}\left(1+\big(u_2+u_1\big)\left\{1+(u_2+u_1)^2\right\}^{-1/2}\right).
\end{equation*}
Solving this and  then applying $S_n^{-1}$ gives
\[
 \phi(n,u_0,u_1)=2^{-n}\left(u_1+\left\{1+(u_1+u_0)^2\right\}^{1/2}\right)+f(n,u_0),
\]
where $f(n,u_0)$ is yet to be determined. The determining equation is (\ref{trivint}), which amounts (after simplification) to
\begin{equation}
f(n+1,u_1)-f(n,u_0)=2^{-(n+1)}(u_1-2u_0).\label{ode3}
\end{equation}
Differentiating this with respect to $u_0$ gives
\[
f_{,1}(n,u_0)=2^{-n},
\]
and so
\[
f(n,u_0)=2^{-n}u_0 + g(n).
\]
Thus, (\ref{ode3}) yields the O$\Delta$E
\[
g(n+1)-g(n)=0.
\]
Consequently, $g(n)$ is an (irrelevant) arbitrary constant, which can be set to zero without loss of generality. This completes the reconstruction of the first integral (\ref{firstintode}).

The process of reconstructing the first integral for a general scalar O$\Delta$Es is similar. 
For any first integral, $\phi$, of the $K^{\text{th}}$-order O$\Delta$E \eqref{ODiffE2},
the root is
\begin{equation}
 \overline{Q}(\zz) := Q(\zz,0)=\phi_{,K}(n+1,u_1,\hdots,u_{K-1},\gamma(\zz)).\label{eq100}
\end{equation}
Given a root, eliminate $u_0$ in  favour of $u_K$ to obtain
\begin{equation*}
\overline{Q}(n,u_0(n,u_1,\hdots,u_K),u_1,\hdots,u_{K-1})=\frac{\partial}{\partial  u_K } \phi(n+1,u_1,\hdots,u_{K-1},u_K).
\end{equation*}
Integrating this and applying $S_n^{-1}$ yields
\begin{equation*}
 \phi(n,u_0,\hdots,u_{K-1})=\int \overline{Q}(n-1,u_0,\hdots,u_{K-1}) d u_{K-1} + f(n,u_0,\hdots,u_{K-2}).
\end{equation*}
All that remains is to find $f$.  Substitute $\phi$ into \eqref{trivint} and simplify to obtain the determining equation for $f$. Differentiate this with respect to $u_0$, then integrate to obtain $f$ up to an arbitrary function $g(n,u_1,\hdots,u_{K-2})$. Obtain the determining equation for $g$, apply $S_n^{-1}$, and repeat the whole process with $S_n^{-1}g$ replacing $f$. Continue in the way until the remaining function to be determined depends on $n$ only.  The determining equation for this function (which we call $h$) is of the form
\[
h(n+1)-h(n)= H(n),
\]
 where $H(n)$ is given. The solution is obtained by summation; this completes the reconstruction of the CLaw.

\section{Partial Difference Equations}\label{secPdiffE}

We now generalize the ideas from the last section to scalar P$\Delta$Es for $u\in\mathbb{R}$ with two independent variables\footnote{The corresponding results for systems of difference equations with arbitrarily many independent variables are obtained \textit{mutatis mutandis}; see \cite{Grant11}.} $\mathbf{n}=(m,n)\in \mathbb{Z}^2$. Again, we regard the independent variables as being free; given $(m,n)$, let $u_{ij}:=u(m+i,n+j)$. [The indices $i$ and $j$ may be negative: each minus sign in the subscript should be treated as being attached to the following digit. For instance, $u_{1 -1}$ denotes $u(m+1,n-1)$.] The action of the shift and identity operators on $(m,n)$ induces an action on every function $f(m,n)$, and in particular on $u_{ij}$, as follows:
\begin{align*}
 S_m:(m,n,f(m,n),u_{ij}) &\mapsto (m+1,n,f(m+1,n),u_{(i+1)j}), \\
 S_n:(m,n,f(m,n),u_{ij}) &\mapsto (m,n+1,f(m,n+1),u_{i(j+1)}), \\
 I:(m,n,f(m,n),u_{ij})&\mapsto (m,n,f(m,n),u_{ij}).
\end{align*}
A  P$\Delta$E is written as
\begin{equation}\label{pdiffe}
 \Delta(m,n,[u])=0,
\end{equation}
where $[\cdot]$ denotes  the argument $\cdot$ and a finite number of its shifts.

A CLaw of a P$\Delta$E is a divergence expression\footnote{For differential equations on $\mathbb{R}^N$ and difference equations on $\mathbb{Z}^N$, the set of divergence expressions is the kernel of the Euler--Lagrange operator \cite{Kupershmidt}.} that vanishes on solutions of the PDE: 
\begin{equation}
\mbox{Div} {\mathbf{F}} := (S_m-I)F+(S_n-I)G=0 \mbox{ when } [{\Delta}]=\mathbf{0}.\label{CLawdef}
\end{equation}
The functions $F:=F(m,n,[u])$ and $G:=G(m,n,[u])$ are the densities of the CLaw.
In the same way as for PDEs, a CLaw of a P$\Delta$E is trivial if and only if it is a linear combination of the following two kinds of trivial CLaws.
\begin{enumerate}
 \item[] \textbf{First kind}: $\mathbf{F}|_{[{\Delta}]=\mathbf{0}}=\mathbf{0}$. The densities vanish on solutions, so we call these \textit{trivial densities}.
\item[] \textbf{Second kind}: $\mbox{Div} \mathbf{F} \equiv 0$, without reference to the equation ${\Delta}=0$ and its shifts. This occurs if there exists a function $H$ such that $F=(S_n-I)H$ and $G=-(S_m-I)H$.
\end{enumerate}

\subsection{Kovalevskaya Form}\label{Koveysec}
A crucial step in dealing with first integrals is to replace $u_K$ by $\Delta+\gamma$.  We can do something similar for a $K$th-order P$\Delta$E (\ref{pdiffe}) if it is in \textit{Kovalevskaya form}\footnote{For a given PDE in Kovalevskaya form, any equation that holds on solutions of the PDE can be pulled back to an identity on the initial conditions; with the above definition, the same is true for P$\Delta$Es.},
\begin{equation}
  \Delta :=u_{K0}-\omega(m,n,\mathbf{u_{0}},\mathbf{u_{1}},\hdots,\mathbf{u_{K-1}})=0,\label{scalarkovey}
\end{equation}
where  $\mathbf{u_{i}}=\{u_{ij}:\ j\in\mathbb{Z}\}$ and there exists $j$ such that $\partial \omega/\partial u_{0j}\neq 0$.
 A schematic example of a 2-D scalar P$\Delta$E in Kovalevskaya form is shown in  Figure \ref{fig1}. Let $\mathbf{z}=\{m,n, \mathbf{u_{0}},\mathbf{u_{1}},\hdots,\mathbf{u_{K-1}} \}$ be the (minimal) initial conditions from which shifts of \eqref{scalarkovey} can be used to find any point of the form $u_{lj}$ for $l \ge K$.  The function $\omega:=\omega(\mathbf{z})$ depends on a finite subset of these points. It is convenient to denote shifts of $\omega$ by $\omega_{ij}:= S_m^iS_n^j\omega$.
\begin{center}\begin{figure}[tb]\begin{center}
 \includegraphics[trim = 0mm 0mm 0mm 0mm, clip,scale=1]{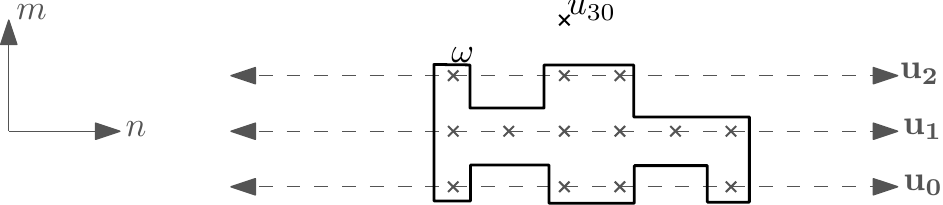}
\caption[A 2-dimensional P$\Delta$E in Kovalevskaya form]{A P$\Delta$E in Kovalevskaya form: $u_{30}=\omega(m,n,\mathbf{u_{0}},\mathbf{u_{1}},\mathbf{u_{2}})$. The box encloses the values $u_{ij}$ on which $\omega$ depends; these are represented by crosses.  The dashed lines represent the initial conditions that would be required to obtain all $u_{ij}$ in the upper half-plane.}\label{fig1}\end{center}
\end{figure}\end{center}

A scalar P$\Delta$E with two independent variables is \textit{explicit} if it can be transformed into Kovalevskaya form by an admissible change of independent variables, that is,  by a bijective linear map from $\mathbb{Z}^2$ to itself. The new independent variables are
\begin{equation}
 \left(\begin{array}{c} \tilde m\\ \tilde n \end{array}\right)= A\left(\begin{array}{c} m\\ n \end{array}\right)+\mathbf{b} \quad \mbox{where } A\in GL(2,\mathbb{Z}), \mbox{ det}(A)=\pm1,\mbox{ and } \mathbf{b} \in \mathbb{Z}^2. \label{explicittrans}
\end{equation}
Although the value of the $u$ at each point is unchanged, the coordinates of the point have changed, so it is helpful to define
\begin{equation}\label{cov}
 \tilde u({\tilde m, \tilde n}):=u(m(\tilde m , \tilde n), n(\tilde m , \tilde n) ).
\end{equation}
Now fix $(m,n)$; using the shorthand $u_{ij}=u(m+i,n+j)$ and setting $\tilde{u}_{00}=u_{00}$, we obtain
\[
u_{ij}=S_m^iS_n^ju_{00}=\tilde{u}(\tilde{m}(m+i,n+j),\tilde{n}(m+i,n+j))
=\tilde{u}_{\{\tilde{m}(m+i,n+j)-\tilde{m}(m,n)\}\{\tilde{n}(m+i,n+j)-\tilde{n}(m,n)\}}.
\]
For instance, the shear 
\begin{align}
 A=\left( \begin{array}{cc} 1 & 1 \\ 0 & 1 \end{array} \right), \quad \mathbf{b}=\left(\begin{array}{c} 0\\ 0 \end{array} \right) \label{quadshear}
\end{align}
transforms any quad-graph equation, 
\begin{equation*}
 u_{11}=\omega(m,n,u_{00},u_{10},u_{01}),
\end{equation*}
into the Kovalevskaya form
\[
\tilde{u}_{21}=\omega(m(\tilde{m},\tilde{n}),n(\tilde{m},\tilde{n}),\tilde{u}_{00},\tilde{u}_{10},\tilde{u}_{11}).
\]
In particular, the dpKdV equation (\textbf{H1} in the ABS classification \cite{ABS}),
\begin{equation}
 u_{11}=u_{00}+\frac{\beta-\alpha}{u_{10}-u_{01}}\,,\label{H1}
\end{equation}
is transformed into
\begin{equation}
 \tilde u _{21}= \tilde u_{00}+\frac{\beta - \alpha}{ \tilde u_{10} - \tilde u_{11}}\,.\label{dKdVtrans}
\end{equation}
\begin{figure}[tb]
\begin{center}
 \includegraphics[trim = 0mm 0mm 0mm 0mm, clip,scale=0.9]{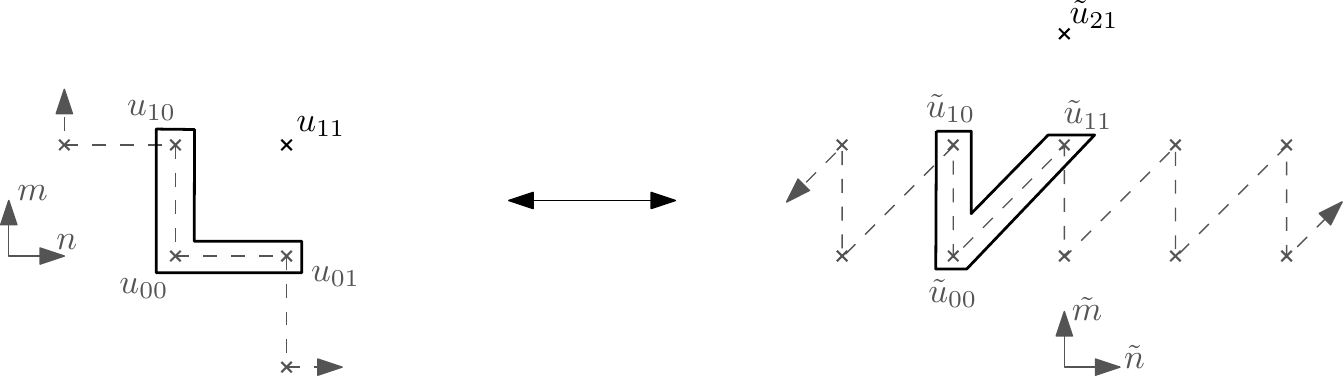}
\caption[Transformation of a quad-graph equation into Kovalevskaya form]{Transformation of a quad-graph equation into Kovalevskaya form. The dashed lines show the initial conditions that would be required to determine all $u_{ij}$ (respectively $\tilde{u}_{ij}$) in the upper-right (respectively upper) half-plane.}\label{fig1b}
\end{center}
\end{figure}

\begin{Lemma}\label{TransLem}
 When an explicit scalar P$\Delta$E is transformed according to \eqref{explicittrans} and \eqref{cov}, there is a bijective correspondence between equivalence classes of CLaws of the original P$\Delta$E and the transformed P$\Delta$E.
\end{Lemma}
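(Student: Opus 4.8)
The plan is to realise the correspondence explicitly: rewrite a divergence expression in the new coordinates, and then check that this operation respects both kinds of triviality. First I would record how the shift operators transform. Writing $A=(a_{ij})$, the substitution $m\mapsto m+1$ becomes the lattice translation $(\tilde m,\tilde n)\mapsto(\tilde m+a_{11},\tilde n+a_{21})$, so, acting on functions of the lattice, $S_m=S_{\tilde m}^{\,a_{11}}S_{\tilde n}^{\,a_{21}}$ and, likewise, $S_n=S_{\tilde m}^{\,a_{12}}S_{\tilde n}^{\,a_{22}}$; since $\det A=\pm1$ we have $A^{-1}\in GL(2,\mathbb Z)$, so $S_{\tilde m}$ and $S_{\tilde n}$ are in turn monomials in $S_m,S_n$. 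In particular the relabelling $u_{ij}\leftrightarrow\tilde u_{kl}$ of \eqref{cov} is a bijection of lattice points that carries solutions of $\Delta=0$ to solutions of the transformed equation. The algebraic tool is the telescoping identity: for integers $p,q$,
\[
S_{\tilde m}^{\,p}S_{\tilde n}^{\,q}-I=(S_{\tilde m}-I)\,P_{p,q}+(S_{\tilde n}-I)\,Q_{q},
\]
where $P_{p,q}$ and $Q_{q}$ are finite $\mathbb Z$-linear combinations of monomials in $S_{\tilde m},S_{\tilde n}$ (using $S^{-1}-I=-S^{-1}(S-I)$ for negative exponents). Applying it to the two monomials $S_m,S_n$ above shows that $\mbox{Div}\,\mathbf F=(S_m-I)F+(S_n-I)G$, read in the new variables, equals $(S_{\tilde m}-I)\tilde F+(S_{\tilde n}-I)\tilde G$ for densities $\tilde F,\tilde G$ obtained from $F,G$ by applying fixed difference operators; this defines a $\mathbb Z$-linear map $\mathbf F\mapsto\tilde{\mathbf F}$.

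Two properties of this map are then straightforward. Because the relabelling takes solutions to solutions and the solution set is shift-invariant, $\mbox{Div}\,\mathbf F=0$ on solutions forces $\mbox{Div}\,\tilde{\mathbf F}=0$ on solutions, so CLaws map to CLaws. And since $\tilde F,\tilde G$ are built from $F,G$ by shifts and $\mathbb Z$-linear combinations, a density pair vanishing on the (shift-invariant) solution set is sent to one vanishing there, so triviality of the first kind is preserved. Performing the same construction with $(A^{-1},-A^{-1}\mathbf b)$ in place of $(A,\mathbf b)$ gives a map in the opposite direction.

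The step I expect to be the real obstacle is the preservation of triviality of the second kind --- equivalently, showing that $\mathbf F\mapsto\tilde{\mathbf F}$ and its companion descend to equivalence classes and compose to the identity there. Both reduce to the structural fact that a density pair whose divergence vanishes identically (without reference to $\Delta=0$) is necessarily a trivial CLaw of the second kind, i.e.\ of the form $F=(S_n-I)H$, $G=-(S_m-I)H$; this is the discrete analogue of the Poincar\'e lemma, in the same spirit as the fact recalled earlier that divergence expressions are precisely the kernel of the difference Euler--Lagrange operator. Granting it, a trivial CLaw of the second kind has $\mbox{Div}\,\mathbf F\equiv0$, hence $\mbox{Div}\,\tilde{\mathbf F}\equiv0$, hence $\tilde{\mathbf F}$ is again trivial of the second kind; moreover the round trip leaves $\mbox{Div}\,\mathbf F$ unchanged, so it alters $\mathbf F$ only by a pair of identically vanishing divergence, that is, by a trivial CLaw. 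Thus the two constructions induce mutually inverse maps on equivalence classes, which is exactly the claim of the lemma.
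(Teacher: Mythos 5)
Your proposal is correct and follows essentially the same route as the paper: the paper likewise computes $S_m=S_{\tilde m}^{a}S_{\tilde n}^{c}$, $S_n=S_{\tilde m}^{b}S_{\tilde n}^{d}$, rewrites the divergence as $(S_{\tilde m}-I)\hat F+(S_{\tilde n}-I)\hat G$ by exactly the telescoping/factorization you describe, checks that both kinds of triviality are preserved, and appeals to invertibility of the transformation for the converse. The only remark worth making is that the discrete Poincar\'e lemma you flag as the ``real obstacle'' and then grant is not actually needed: triviality of the second kind is defined by $\mbox{Div}\,\mathbf F\equiv 0$, which is manifestly preserved since the transformed divergence is the same identically vanishing expression read in the new variables, and the same observation already shows the round trip changes $\mathbf F$ only by a trivial CLaw.
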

\begin{proof}
The transformation  is of the form
\begin{align*}
 \left( \begin{array}{c} \tilde m \\ \tilde n \end{array} \right) = \left( \begin{array}{cc} a & b \\ c& d \end{array} \right) \left( \begin{array}{c} m \\ n \end{array} \right) + \left( \begin{array}{c} e \\ f \end{array}\right),\qquad ad-bc=\pm 1,\quad a,b,c,d,e,f\in\mathbb{Z},
\end{align*}
so the effect of the original shift operators on the transformed independent variables is
\begin{align*}
 S_m \tilde m = a(m+1)+bn+e = \tilde m + a, \quad S_m \tilde n = c(m+1)+dn + f = \tilde n + c,\\
S_n \tilde m = am+b(n+1)+e = \tilde m + b, \quad S_n \tilde n = cm+d(n+1) + f = \tilde n + d.
\end{align*}
Thus
\begin{equation*}
 S_m= S_{\tilde m}^a  S_{\tilde n}^c, \quad S_n= S_{\tilde m}^b  S_{\tilde n}^d.
\end{equation*}
Therefore given a CLaw with densities $F$ and $G$,
\begin{align}
(S_m-I)F+(S_n-I)G &= ( S_{\tilde m}^a  S_{\tilde n}^c-I)\tilde F + ( S_{\tilde m}^b  S_{\tilde n}^d-I)\tilde G\nonumber\\
&=\left\{( S_{\tilde m}^a -I)\tilde F+ ( S_{\tilde m}^b -I)\tilde G\right\} +\left\{ ( S_{\tilde n}^c -I)S_{\tilde m}^a\tilde F+ ( S_{\tilde n}^d -I)S_{\tilde m}^b\tilde G\right\}\nonumber\\
&= ( S_{\tilde m} -I)\hat F+ ( S_{\tilde n}-I)\hat G,\label{CLawsTrans}
\end{align}
where $\tilde F$ and $\tilde G$ denote the densities in terms of the transformed variables and the last equality is obtained by factorizing each expression in braces.
If the original CLaw is trivial of the second kind then it vanishes identically, so the transformed CLaw must also vanish identically.  If the original densities $F$ and $G$ are trivial then so are $\tilde F$ and $\tilde G$;  consequently, the new densities $\hat F$ and $\hat G$ will also be trivial.  As the transformation is invertible, the converse is also true, which completes the proof. 
\end{proof}

For example, when a quad-graph equation is transformed by
 \eqref{quadshear}, the transformed equation has CLaws with the densities 
\begin{equation}
 \hat F = \tilde F + \tilde G, \quad \hat G =  S_{\tilde m} \tilde G.\label{H1transCLaw}
\end{equation}
To transform back to the quad-graph, use
\begin{align}
 \left( \begin{array}{c}  m \\  n \end{array} \right) = \left( \begin{array}{cc} 1 & -1 \\ 0& 1 \end{array} \right) \left( \begin{array}{c} \tilde m \\ \tilde n \end{array} \right).\label{quadreturn}
\end{align}
Then the densities for the CLaws are
\begin{equation}
 F=\hat F - S_m^{-1}\hat G, \quad  G = S_m^{-1}\hat G.\label{H1transback}
\end{equation}
Generally speaking, the new densities will depend on variables other than the transformed initial conditions; however, the difference equation can be used to pull them back to expressions that depend only on the transformed initial conditions.

For later use, the dpKdV equation has four three-point and three five-point CLaws \cite{Rasin2007}; the transformed equation \eqref{dKdVtrans} has corresponding CLaws whose densities are listed in Table \ref{table1b} (where the tildes have been dropped to prevent clutter).
\begin{footnotesize}
\begin{table}[tp]
\begin{center}
\caption{Densities for the three and five point CLaws of transformed dpKdV}
\begin{tabular}{l}
 \hline
$ \hat  F_1=\left( -1 \right) ^{m} \left( 2\,u_{00}(u_{11}-u_{10})+\alpha-\beta \right)$\T\\$
\hat G_1= \left( -1 \right) ^{m} \left( 2\,u_{10} \left( u_{0-1}+{
\frac {\beta-\alpha}{u_{1-1}-u_{10}}} \right) -\alpha
 \right) \label{CLaw1} $ \B\\ \hline 
$ \hat F_2= \left( u_{00}-u_{10} \right)  \left( u_{00}u_{10}-\alpha \right)- \left( u_{00}-u_{11} \right)  \left( u_{00}u_{11}-\beta \right)  $
 \T\\ $
\hat G_2= \left( u_{10}-u_{0-1}-{\frac {\beta-\alpha}{u_{1-1}-u_{10}}} \right)  \left( u_{10} \left( u_{0-1}+{\frac {
\beta-\alpha}{u_{1-1}-u_{10}}} \right) -\alpha \right)\label{CLaw2} $
 \B\\ \hline $
\hat F_3= \left( -1 \right) ^{m} \big( {u_{00}}(u_{11}-u_{10})(u_{00}+u_{10}+u_{11})
+\alpha(u_{00}+u_{10})-\beta(u_{00}+u_{11}) \big) $
 \T\\$
\hat G_3=  \left( -1 \right) ^{m} \left( u_{10}+u_{0-1}+{\frac {\beta
-\alpha}{u_{1-1}-u_{10}}} \right)  \left( u_{10}
 \left( u_{0-1}+{\frac {\beta-\alpha}{u_{1-1}-u_{10}}}\label{CLaw3}
 \right) -\alpha \right) $
\B\\ \hline $
\hat F_4= \left( -1 \right) ^{m+1} \left(2\,{u_{00}}^{2}({u_{10}}^{2} -{u_{11}}^{2})+4\,u_{00}(\beta u_{11}-\alpha u_{10})+{\alpha}^{2}-{\beta}^{2} \right) $
\T \\$
\hat G_4=  \left( -1 \right) ^{m} \left( 2\,{u_{10}}^{2} \left( u_{0-1}+{\frac {\beta-\alpha}{u_{1-1}-u_{10}}} \right) ^{2}-4\,
\alpha\,u_{10} \left( u_{0-1}+{\frac {\beta-\alpha}{u_{1-1}-u_{10}}} \right) +{\alpha}^{2} \right)\label{CLaw4}
$\B\\ \hline
$\hat F_5= -\ln  \left( {\frac {\beta-\alpha}{u_{10}-u_{11}}} \right) +\ln 
 \left( u_{0-1}-u_{00}+{\frac {\beta-\alpha}{u_{1-1}-u_{10}}} \right) 
 $
\T\\ 
$\hat G_5= \ln  \left( u_{1-1}-u_{10}+ \left( \beta-\alpha \right)  \left( u_{0-2}-u_{0-1}
+{\frac {\beta-\alpha}{u_{1-2}-u_{1-1}}}-{\frac {
\beta-\alpha}{u_{1-1}-u_{10}}} \right) ^{-1} \right) 
$\B\\ \hline
$\hat F_6 =-\ln  \left( u_{00}-u_{0-1}+{\frac {\beta-\alpha}{u_{10}-u_{11}}} \right) +\ln  \left( {\frac {\beta-\alpha}{u_{1-1}-u_{10}}} \right)
 $\T\\
$\hat G_6 = \ln  \left(  \left( \beta-\alpha \right)  \left( u_{0-2}-u_{0-1}+{\frac {
\beta-\alpha}{u_{1-2}-u_{1-1}}}-{\frac {\beta-\alpha}
{u_{1-1}-u_{10}}} \right) ^{-1} \right) \label{CLaw6}
 $\B\\ \hline
$\hat F_7= (m-n)\hat F_5 + n \hat F_6 $\T\\ 
$\hat G_7=(m-n+1)\hat G_5 + n \hat G_6 $\B\\ \hline 
\end{tabular}\label{table1b}\end{center}\end{table}
\end{footnotesize}

\subsection{The characteristic}\label{defChar}
The characteristic of a given CLaw of a P$\Delta$E in Kovalevskaya form (\ref{scalarkovey}) is defined as follows.  Any $u_{ij}$ may be written in terms of the initial conditions, $\mathbf{z}$, and $[{\Delta}]$. Given a differentiable function, $C(\mathbf{z},[{\Delta}])$, that satisfies $C(\mathbf{z},[{0}])=0$, the Fundamental Theorem of Calculus yields 
 \[
  C(\mathbf{z},[{\Delta}])=\int_{\lambda=0}^{1} \frac{d}{d\lambda} C(\mathbf{z},[\lambda \Delta]) \,d\lambda
=\int_{\lambda=0}^1 \sum_{i,j} (S_m^i S_n^j \Delta) \frac{\partial C(\mathbf{z},[\lambda \Delta])}{\partial( S_m^iS_n^j\lambda\Delta)} \,d\lambda.
 \]
Summing by parts gives
\begin{align}
 C(\mathbf{z},[\Delta])=&\Delta \int_{\lambda=0}^1 \left\{E_{\Delta}\left(C(\mathbf{z},[{\Delta}])\right)\right\}\!\big|_{\Delta \mapsto \lambda \Delta} \, d\lambda + \mbox{Div} {\mathbf{F}},\label{eq1111}
\end{align}
where $E_\Delta$ is the difference Euler--Lagrange operator that corresponds to variations in $\Delta$,
\begin{equation}
 E_{\Delta}(C(\mathbf{z},[\Delta])):= \sum_{i,j} S_m^{-i}S_n^{-j} \frac{\partial C(\mathbf{z},[\Delta])}{\partial (S_m^iS_n^j \Delta)}\,,
\end{equation}
and where $\mathbf{F}$ has two components, each of which vanishes on solutions of the P$\Delta$E. (The generalization to P$\Delta$Es with  more than two independent variables is obvious.)
If, in addition, $C(\mathbf{z},[{\Delta]})$ is a divergence expression (so that it is a CLaw) then, by subtracting the trivial CLaw $\mbox{Div} {\mathbf{F}}$ from both sides of (\ref{eq1111}), one obtains the equivalent CLaw 
\begin{equation*}
 \tilde{C}(\mathbf{z},[{\Delta}])= C(\mathbf{z},[{\Delta}])-\mbox{Div} {\mathbf{F}}= {Q}(\mathbf{z},[{\Delta}]) \cdot {\Delta} \end{equation*}
where  
\begin{equation}
 Q(\mathbf{z},[{\Delta}]):=\int_{\lambda=0}^1 \left\{\left(E_{\Delta}(C(\mathbf{z},[{\Delta}]))\right)\right\}\big|_{\Delta \mapsto \lambda \Delta} \, d\lambda.\label{char1}
\end{equation}
The function ${Q}$ is a multiplier of the difference equation, so to be consistent with the continuous case and \cite{MarunoQuispel06}, it is defined to be the characteristic of the CLaw.

Just as for O$\Delta$Es, a trivial characteristic is one that vanishes on solutions, so ${Q}(\mathbf{z},[{0}])={0}$. 
To factor out trivial characteristics, we define the \textit{root} of $Q$ to be
\begin{equation}
 \overline{Q}(\zz):= \lim_{\mu \to 0} Q(\zz,[\mu\Delta]) = \lim_{\mu \rightarrow 0} \int_{\lambda=0}^1 \left\{E_{\mu\Delta}(C(\mathbf{z},[\mu{\Delta}]))\right\}\big|_{\Delta \mapsto \lambda \Delta} \, d\lambda
=\left\{E_{\Delta}(C(\mathbf{z},[\Delta]))\right\}\big|_{[{\Delta}]=\mathbf{0}}.\label{Qsols}
\end{equation}
 For PDEs, the characteristic of a CLaw is trivial if and only if the CLaw is trivial.  We will now show that, with our definition of the characteristic (\ref{char1}), the same is true for P$\Delta$Es in Kovalevskaya form.
Hence, we need to prove that a CLaw is trivial if and only if $\overline{Q}(\zz)=0$.

\subsection{A trivial CLaw implies a trivial characteristic}\label{sectrivial}
A trivial CLaw of the second kind ($\mbox{Div} \mathbf{F}\equiv 0$) vanishes identically, so the proof is immediate.  
However the first kind of triviality takes the form $ \mathbf{F}(\mathbf{z},[{0}])=\mathbf{0}$.  To deal with this  we use the identity
\begin{align}
 &E_{\Delta}\left(\mbox{Div} \textbf{F}(\mathbf{z},[\mathbf{\Delta}])\right)=E_{\Delta}\Bigl((S_m-I) \bigl({F}(\mathbf{z},[{\Delta}])-{F}(\mathbf{z},[{0}])\bigr)+(S_n-I) \bigl({G}(\mathbf{z},[{\Delta}])-{G}(\mathbf{z},[{0}])\bigr)\Bigr)\nonumber\\
&=\sum_{i,j} S_m^{-i}S_n^{-j} \left(\! \sum_{k} \frac{\partial S_m z_k}{\partial (S_m^iS_n^j \Delta)}\cdot S_m\frac{\partial\left\{F(\mathbf{z},[{\Delta}])\!-\!F(\mathbf{z},[0])\right\}}{\partial z_k}   \!\right)\!-\! \sum_{i,j} S_m^{-i}S_n^{-j} \left(  \frac{\partial F(\mathbf{z},[\Delta])}{\partial S_m^iS_n^j \Delta^l}+\frac{\partial G(\mathbf{z},[\Delta])}{\partial (S_m^iS_n^j \Delta)}\right) \nonumber\\
&\quad + \sum_{i,j} S_m^{-i}S_n^{-j} \left( \sum_{l,k} \frac{\partial S_mS_m^lS_n^k \Delta}{\partial S_m^iS_n^j \Delta} \cdot S_m\frac{\partial F(\mathbf{z},[{\Delta}])}{\partial S_m^lS_n^k \Delta} +\frac{\partial S_nS_m^lS_n^k \Delta}{\partial S_m^iS_n^j \Delta} \cdot S_n\frac{\partial G(\mathbf{z},[{\Delta}])}{\partial S_m^lS_n^k \Delta}\right),
 \label{eqn9}
\end{align}
where $z_k \in \zz$ and we have used the fact that (for equations in Kovalevskaya form) $S_n z_k \in \zz$.
The identity
\begin{equation*}
 \sum_{i,j,k,l} S_m^{-i}S_n^{-j} \left( \frac{\partial S_mS_m^lS_n^k \Delta}{\partial S_m^iS_n^j \Delta} \cdot S_m\frac{\partial F(\mathbf{z},[{\Delta}])}{\partial S_m^lS_n^k \Delta}\right)=\sum_{k,l} S_m^{-1}S_m^{-l}S_n^{-k}S_m\frac{\partial F(\mathbf{z},[{\Delta}])}{\partial S_m^lS_n^k \Delta}\,,
\end{equation*}
together with a similar identity for the $G$ term, simplifies (\ref{eqn9}) to
\begin{equation}
 E_{\Delta}(\mbox{Div} \mathbf{F}(\mathbf{z},[{\Delta}]))=\sum_{i,j,k} S_m^{-i}S_n^{-j} \left(\frac{\partial S_m z_k}{\partial S_m^iS_n^j\Delta}\cdot S_m\frac{\partial}{\partial z_k}\left(F(\mathbf{z},[{\Delta}])-F(\mathbf{z},[0])\right)   \right).\nonumber
\end{equation}
From this, it immediately follows that
\begin{equation}
\big\{  E_{\Delta}\left(\mbox{Div} \textbf{F}(\mathbf{z},[{\Delta}])\right)  \big\}\Big|_{[{\Delta}]=\mathbf{0}}=0,\nonumber
\end{equation}
so the root is zero.

\subsection{A trivial characteristic implies a trivial CLaw}\label{seccon}
Having proved that the characteristic of a trivial CLaw is a trivial characteristic, it is now clear that trivial densities may be added to the densities of any CLaw without affecting the root.
In particular, the components of $\textbf{F}(\mathbf{z},[\mathbf{\Delta}])-\textbf{F}(\mathbf{z},[\mathbf{0}])$ are trivial densities,  so any explicit dependence on $[\mathbf{\Delta}]$ in the densities can be removed.  
Therefore, we now assume that the densities of any particular CLaw for (\ref{scalarkovey}) do not depend explicitly on $[\mathbf{\Delta}]$; with this assumption, only the second kind of triviality can occur.  Consequently, the CLaw has densities of the form
\begin{equation*}
 F:=F(m,n,\mathbf{u_0},\hdots,\mathbf{u_{K-1}}), \quad \mbox{and} \quad G:=G(m,n,\mathbf{u_0},\hdots,\mathbf{u_{K-1}}).
\end{equation*}
Clearly, $S_nG$ does not depend on $\mathbf{u_{K}}$, so the root is
\begin{equation*}
 \overline{Q} = \sum_{j=0}^R S_n^{-j}\frac{\partial }{\partial S_n^j \Delta} F(m+1,n,\mathbf{u_1},\hdots,\mathbf{u_{K-1}},[S_n^j \Delta + \omega_{0j}])\big|_{[\Delta]=0}.  
\end{equation*}
Here we have assumed that $F$ depends on $\{u_{K-1,j}, j=0,\dots,R\}$ and on no other $u_{K-1,j}$; there always exists an $R\geq 0$ for which this assumption is valid, as we have the freedom to select the starting-point $(m,n)$ relative to which all $u_{ij}$ are compared.
The shifts of $S_mF$ on which $\overline{Q}$ depends are shown in Figure \ref{fig4}; here and henceforth (for brevity), we refer to the values $u_{ij}$ that occur in any expression as `points' on which the expression depends.

 Let $u_{0L}$ be the leftmost point in $\mathbf{u_{0}}$ on which $\omega=\omega_{00}$ depends.  Then
\begin{equation*}
\frac{\partial \omega_{0j}}{\partial  u_{0(L+j)} } \neq 0, \quad j \in  \mathbb{Z}; \qquad\quad\frac{\partial \omega_{0i}}{\partial u_{0(L+j)} } =0, \quad i > j \quad i,j \in \mathbb{Z}.
\end{equation*}
On solutions of the P$\Delta$E, each point $u_{0(L+j)}$ (such as the square points in Figure \ref{fig4}) may be replaced by $\omega_{0j}$ (represented by the discs in Figure \ref{fig4}) as an  independent variable in $\overline{Q}$, because the determinant of the Jacobian \begin{small}$\frac{\partial (  \omega_{0-\!R}, \hdots,\omega_{0R} ) }{\partial (u_{0(L-R)},\hdots,u_{0(L+R)} ) } $\end{small} is nonzero. 
Therefore
\begin{equation}
  \overline{Q} = \sum_{j=0}^R S_n^{-j}\frac{\partial }{\partial \omega_{0j}} F(m+1,n,\mathbf{u_1},\hdots,\mathbf{u_{K-1}},[\omega_{0j}])=: E_{\omega}(S_mF). \label{euleromega}
\end{equation}
This is a restricted difference Euler operator corresponding to variations in $\omega$; in effect, it treats $m$ and the $\mathbf{u_i}$ terms as parameters.  The kernel of this operator is made up of total divergences of the form $(S_n-I)H$, and functions of $\zz\backslash\uu_\mathbf{0}$ only (see Lemma \ref{EulerLem} below).  Thus if the characteristic is zero on solutions of the P$\Delta$E then
\begin{figure}[t]
\begin{center}
 \includegraphics[trim = 0mm 0mm 0mm 0mm, clip,scale=0.7]{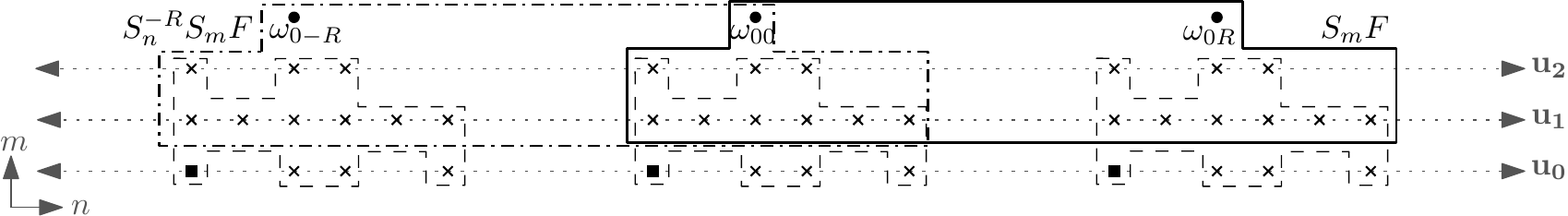}
\caption[A graphical representation of the terms in the characteristic]{A graphical representation of the terms in the characteristic for a scalar P$\Delta$E with two independent variables.  The solid black box encloses points on which $S_mF$ depends and the dash-dot box encloses points on which $S_n^{-R}S_mF$ depends. The dashed boxes enclose points on which $\omega_{0-\!R}$, $\omega_{00}$ and $\omega_{0R}$ depend.}\label{fig4}
\end{center}\end{figure}
\[
F(m\!+\!1,n,\mathbf{u_1},\hdots,\mathbf{u_{K-1}},[\omega_{0j}])=(S_n-I)H(m\!+\!1,n,\mathbf{u_1},\hdots,\mathbf{u_{K-1}},[\omega_{0j}])+f(m\!+\!1,n,\mathbf{u_1},\hdots,\mathbf{u_{K-1}}),
\]
for some $f$, and so
\begin{equation*}
 F(\zz)=(S_n-I)H(m,n,\mathbf{u_0},\hdots,\mathbf{u_{K-2}},[u_{(K-1)j}])+f(m,n,\mathbf{u_0},\hdots,\mathbf{u_{K-2}}).
\end{equation*}
Adding the trivial CLaw
\begin{equation}
 F_{T}=-(S_n\!-\!I)H, \qquad G_{T}=(S_m\!-\!I)H,\label{Triv}
\end{equation}
to the original densities gives the equivalent densities
\begin{equation*}
 \tilde F = f(m,n,\mathbf{u_0},\hdots,\mathbf{u_{K-2}}), \quad \tilde G=G+(S_m-I)H.
\end{equation*}
The $\tilde G$ density may contain $\Delta$ terms, but these can be removed by adding a trivial density.  Thus the divergence expression for these densities cannot contain any $\Delta$ terms, so in order for it to be a CLaw it must vanish identically and is thus trivial.

The following lemma identifies the kernel of the restricted Euler operator; part of the proof will be used in \S6 to enable us to reconstruct a CLaw from its root.
\begin{Lemma}\label{EulerLem}
 The kernel of $E_{\omega}$ consists of sums of functions that are independent of $\omega$ and its shifts, together with total divergences in the $n$ direction.
\end{Lemma}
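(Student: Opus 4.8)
The plan is to characterize the kernel of $E_{\omega}$ by induction on the ``width'' of the $\omega$-dependence, mimicking the classical proof that the kernel of the Euler operator consists of total divergences, but adapted to the fact that $E_{\omega}$ is a \emph{restricted} operator treating $m$ and the $\mathbf{u_i}$ as inert parameters. Concretely, suppose $P=P(m,n,\mathbf{u_1},\dots,\mathbf{u_{K-1}},[\omega_{0j}])$ lies in the kernel, and say $P$ depends on $\omega_{0a},\dots,\omega_{0b}$ (with $a\le b$) and no others. If $b=a$, i.e.\ $P$ depends on at most one shift of $\omega$, then $E_{\omega}(P)=S_n^{-a}\,\partial P/\partial\omega_{0a}=0$ forces $\partial P/\partial\omega_{0a}=0$, so $P$ is independent of $\omega$ and we are done. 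Otherwise $b>a$, and the strategy is to peel off the top shift: I would examine $\partial P/\partial\omega_{0b}$, show it is itself (after an appropriate shift) an ``exact'' term, integrate to produce a divergence $(S_n-I)H$ that removes the dependence on $\omega_{0b}$, and then apply the inductive hypothesis to $P-(S_n-I)H$, which now depends on strictly fewer shifts of $\omega$.

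The key computational step is the peeling. Applying $E_{\omega}$ to $P$ and isolating the contribution of the highest shift: the coefficient of the ``leading'' term in the Euler--Lagrange expression $E_{\omega}(P)=\sum_{j} S_n^{-j}\partial P/\partial\omega_{0j}$ that involves $\omega_{0,b+\text{something}}$ cannot be cancelled by any other term, which forces a relation showing $\partial P/\partial\omega_{0b}$ has no dependence on $\omega_{0b}$ itself — so $P$ is affine in $\omega_{0b}$. Writing $P = A\cdot\omega_{0b} + B$ with $A,B$ independent of $\omega_{0b}$, one then uses the vanishing of the next-order part of $E_{\omega}(P)$ to deduce a compatibility condition on $A$; this is exactly the condition that lets us write $A\cdot\omega_{0b}$ modulo lower-shift terms as $(S_n - I)$ applied to something, because $\omega_{0b} = S_n\omega_{0(b-1)}$ and the telescoping structure of $(S_n-I)$ converts a dependence on $\omega_{0b}$ into a dependence on $\omega_{0(b-1)}$ plus a divergence. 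I would make this explicit by integrating: set $H$ so that $\partial H/\partial\omega_{0(b-1)}$ matches $S_n^{-1}A$ appropriately, so that $P-(S_n-I)H$ genuinely loses the $\omega_{0b}$ dependence while (crucially) not reintroducing any shift higher than $\omega_{0b}$ or lower than $\omega_{0a}$.

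The main obstacle I anticipate is bookkeeping the shift ranges: one must ensure that subtracting $(S_n-I)H$ does not widen the window of $\omega$-shifts on the low end (it must only narrow the high end), so that the induction on $b-a$ actually terminates; and one must track carefully that the ``parameters'' $m,\mathbf{u_1},\dots,\mathbf{u_{K-1}}$ — which are \emph{not} acted on by the restricted Euler operator but \emph{are} shifted by $S_n$ — are handled consistently, since $S_n$ does move the $n$-index of every $u_{ij}$. This is the analogue, in the restricted setting, of the standard lemma that the only difference functions annihilated by the full Euler operator are total divergences, but the restriction means I should not simply cite that lemma: I need the version where ``divergence'' means $(S_n-I)H$ alone and the kernel additionally contains all functions of $\zz\backslash\uu_{\mathbf 0}$. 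Once the top shift has been peeled off repeatedly, the residual function depends on no shift of $\omega$ at all, i.e.\ it is a function of $m,n,\mathbf{u_1},\dots,\mathbf{u_{K-1}}$ (equivalently, pulled back, of $\zz\backslash\uu_{\mathbf 0}$), and summing up the divergences $(S_n-I)H$ collected along the way gives the claimed decomposition.

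Finally, I would record that the inductive construction is effective: it produces $H$ by explicit integration at each stage, so the proof simultaneously furnishes the algorithm for reconstructing a CLaw from its root that is promised for use in \S6. I expect the write-up to be a few pages, with the affine-dependence step and the shift-range tracking being where the real care is needed, while the base case and the assembly of the telescoped divergences are routine.
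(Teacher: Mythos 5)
Your overall strategy---induction on the number of shifts of $\omega$ on which $P$ depends, peeling off the top shift by subtracting a divergence $(S_n-I)H$---is a legitimate and genuinely different route from the paper's, which instead proves the lemma in a few lines by a homotopy formula: setting $\omega_\lambda=\lambda\omega_{00}+(1-\lambda)g(m,n,\uu)$ and integrating $\tfrac{d}{d\lambda}f(m,n,\uu,[\omega_\lambda])$ by parts gives
\[
 f(m,n,\uu,[\omega])=f(m,n,\uu,[g])+(\omega_{00}-g)\int_{\lambda=0}^1 E_{\omega_\lambda}\bigl(f(m,n,\uu,[\omega_\lambda])\bigr)\,d\lambda+(S_n-I)\int_{\lambda=0}^1 h\,d\lambda,
\]
so the kernel condition annihilates the middle term at once and the first term is manifestly independent of $\omega$ and its shifts. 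The homotopy version has the practical advantage that this integral formula is precisely what is reused in \S\ref{reconSec} (equation \eqref{recon1}) to reconstruct $F$ from the root; your inductive construction would furnish a different, term-by-term reconstruction algorithm, which is workable but not the one the paper builds on.

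However, the central computational step as you describe it is incorrect. The extreme term of $E_\omega(P)=\sum_j S_n^{-j}\,\partial P/\partial\omega_{0j}$ does \emph{not} force $P$ to be affine in the top shift $\omega_{0b}$, and kernel elements need not be affine in it: $P=(S_n-I)\omega_{00}^2=\omega_{01}^2-\omega_{00}^2$ lies in the kernel and is quadratic in its top shift. What the extreme-shift analysis actually yields is a mixed condition on the two \emph{extreme} shifts: the only contribution to $\partial E_\omega(P)/\partial\omega_{0(a-b)}$ comes from the $j=b$ term, so $\partial^2 P/\partial\omega_{0a}\,\partial\omega_{0b}=0$, whence $P=A+B$ with $A$ independent of $\omega_{0b}$ and $B$ independent of $\omega_{0a}$. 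Then $B=(S_n-I)(S_n^{-1}B)+S_n^{-1}B$, where $S_n^{-1}B$ depends only on $\omega_{0a},\dots,\omega_{0(b-1)}$, so subtracting the divergence narrows the window at the top without widening it at the bottom---which also disposes of the bookkeeping worry you raise. With that correction your induction closes; as written, the affine-dependence step would fail.
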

\begin{proof}
 In the following, we use the notation $\uu=\{u_{ij}:\ 1\le i \le K-1\}$ and
\begin{equation*}
 \omega_\lambda:=\lambda\omega_{00}+(1-\lambda)g(m,n,\uu),
\end{equation*}
where $g$ is any convenient function (usually, $g=0$). For any differentiable function $f=f(m,n,\uu,[\omega])$,
\begin{align}
 \frac{d}{d \lambda} f(m,n,\uu,[ \omega_\lambda]) =& \sum_{j} \frac{\partial f}{\partial S_n^j \omega_\lambda}S_n^j ( \omega_{00}-g(\nn,\uu))\nonumber\\
=&   ( \omega_{00}\!-\!g)E_{\omega_\lambda}(f(m,n, \uu, [ \omega_\lambda])) +  (S_n-I)h(m,n,\uu,[ \omega],\lambda),\label{eq:genproof1}
\end{align}
for some function $h$. Integrating \eqref{eq:genproof1} with respect to $\lambda$, we obtain
\begin{equation*}
 f(m,n,\uu,[ \omega])=f(m,n,\uu,[g]) +  ( \omega_{00}\!-\!g) \int_{\lambda=0}^{1} E_{\omega_\lambda}(f(m,n,\uu,[\omega_\lambda])) \, d\lambda + (S_n-I)\int_{\lambda=0}^1h(m,n,\uu,[ \omega],\lambda) \, d\lambda
\end{equation*}
If $f\in\text{ker}(E_{\omega})$ then the result follows.
\end{proof}

\section{Using roots to detect equivalence}\label{examplesec1}
For P$\Delta$Es, the fact that a CLaw may involve a large number of points can make it difficult to identify its underlying order. The results of \S3 have established that the root completely characterizes an equivalence class of conservation laws, resolving this difficulty.

As an illustration, consider the transformed dpKdV equation \eqref{dKdVtrans}. Using \textsc{maple}, we have calculated the roots of the CLaws of \eqref{dKdVtrans} that are listed in Table \ref{table1b}. These roots are displayed in Table \ref{table1a}, expressed in terms of the functions $\omega_{ij}$ (because this is more compact than pulling back to write each $\overline{Q}_i$ in terms of $\zz$). The transformed dpKdV equation also has the following CLaw
\begin{small}
\begin{align*}
 F=&\left( -1 \right) ^{m+1} \left( 2\,u_{11} \left( u_{01}+{
\frac {\beta-\alpha}{u_{11}-u_{{12}}}} \right) -\beta \right)   +\left( -1 \right) ^{m+1} \left(\alpha-2 u
_{11} \left( u_{00}+{\frac {\beta-\alpha}{u_{10}-u_{{
11}}}} \right)  \right) 
  ,\\
G=&\frac{2u_{10}\left( -1 \right) ^{m+1}}{y}\left(\left( u_{00}+{\frac {\beta-\alpha}{u_{10}-u_{{1
1}}}} \right) ^{2} - \left( u_{00}+{\frac {\beta-
\alpha}{u_{10}-u_{11}}} \right)  \left( u_{{0
-\!1}}+{\frac {\beta-\alpha}{u_{1-\!1}-u_{10}}} \right)  
  \right) + \\
&+ \frac{\,(-1)^{m+1}}{y}\left(\alpha\left( u_{00}+{\frac {
\beta-\alpha}{u_{10}-u_{11}}} \right) +\alpha \left( u_{
{0-\!1}}+{\frac {\beta-\alpha}{u_{1-\!1}-u_{10}}} \right)-
2\beta\, 
\left( u_{00}+{\frac {\beta-\alpha}{u_{10}-u_{11}}} \right)\right),\\
\text{where}\qquad y=&u_{00}-u_{0-1}-{\frac {\beta-\alpha}{u_{1-\!1}-u_{{
10}}}}+{\frac {\beta-\alpha}{u_{10}-u_{11}}}\,.
\end{align*}
\end{small}

\noindent This apparently high-order CLaw's root is \begin{small}$2\left( -1 \right) ^{m} \left( u_{11}-u_{10}
 \right)$\end{small},  which shows that actually it is equivalent to a multiple of the first CLaw in Table \ref{table1b}.

\begin{small}
\begin{table}[tbp]
\begin{center}
\caption{Roots of the transformed dpKdV}
\begin{tabular}{l}
 \hline
$ \overline{Q}_1=2 \left( -1 \right) ^{m+1}(u_{11}-
u_{10})$
 \T \B\\ \hline 
$\overline{Q}_2=(u_{11}- u_{10})(u_{11}+u_{10}-2 \omega_{00})+\alpha-\beta $
\T \B\\ \hline
$\overline{Q}_3= \left( -1 \right) ^{m} \big((u_{10}- u_{11})(u_{11}+u_{10}+2 \omega_{00}) +\alpha-\beta \big)
 $
\T\B\\ \hline
$ \overline{Q}_4=4 \left( -1 \right) ^{m} \left(\omega_{00}({u_{10}}^{2} -{u_{11}}^{2})+
\alpha u_{11}-\beta\,u_{10}
 \right)
 $\T\B\\ \hline\\[-2ex]
$\overline{Q}_5 =  \left( \beta-\alpha \right)  \left( \omega_{0-1}-
\omega_{00} \right) ^{-2} \left( u_{10}-u_{11}+{\frac {\beta-\alpha}{
\omega_{0-1}-\omega_{00}}} \right) ^{-1}- \left( \beta-\alpha \right)  \left( \omega_{00}-\omega_{01}
 \right) ^{-2} \left( u_{11}-u_{12}+{\frac {\beta-\alpha}{\omega_{00}-
\omega_{01}}} \right) ^{-1}$\\ $\qquad\,\,+ \left( \omega_{00}-\omega_{{01
}} \right) ^{-1}- \left( \omega_{{
0-1}}-\omega_{00} \right) ^{-1} $\T\B\\ \hline \\[-2ex]
$\overline{Q}_6 = \left( \beta-\alpha
 \right)  \left( \omega_{00}-\omega_{01} \right) ^{-2} \left( u_{1
1}-u_{10}+{\frac {\beta-\alpha}{\omega_{00}-\omega_{01}}}
 \right) ^{-1}-
 \left( \beta-\alpha \right)  \left( \omega_{0-1}-\omega_{00}
 \right) ^{-2} \left( u_{10}-u_{1-1}+{\frac {\beta-\alpha}{\omega_{0-1}-
\omega_{00}}} \right) ^{-1}$\\ $\qquad\,\,+ \left( \omega_{0-1}-\omega_{00} \right) ^{-1} - \left( \omega_{00}-\omega_{01} \right) ^{-1}$ \T\B \\ \hline\\[-2ex]
$ \overline{Q}_7=  (\beta-\alpha)(\omega_{00
}-\omega_{01})^{-2}\left\{ \left( u_{11}-u_{12}+{\frac {\beta-
\alpha}{\omega_{00}-\omega_{01}}} \right) ^{-1}+\left( u_{11}-u_{10}+{\frac {\beta-\alpha}{\omega_{00}-\omega_{01}
}} \right) ^{-1}\right\}$\\ $\qquad\,\, + \left( m-n \right)  \overline{Q}_5+n\overline{Q}_6+ 2\left( \omega_{01}-\omega_{00}
 \right) ^{-1}  $ \T\B \\ \hline
\end{tabular}\label{table1a}\end{center}\end{table}
\end{small}

\subsection{The Gardner CLaws for dpKdV}\label{secGardner}
In \cite{RasinSchiff09}, Rasin and Schiff used a discrete version of the Gardner transformation to construct an infinite number of CLaws for the dpKdV equation \eqref{H1}. (Rasin \cite{Rasin2010} has subsequently used the same approach to generate an infinite hierarchy of CLaws for all the equations in the ABS classification and one asymmetric equation.)
They showed that these CLaws were distinct by taking a continuum limit and showing that the resulting CLaws for the continuous equation are distinct.   By using roots, one can prove that their CLaws are distinct without having to take a continuum limit.

In this section, we consider the dpKdV equation on the quad-graph rather than in Kovalevskaya form\footnote{Kovelevskaya form is convenient for proving that the root characterizes each equivalence class of CLaws. However, for any explicit P$\Delta$E, the root (with respect to an appropriate set of initial conditions) can also be calculated without transforming to Kovalevskaya form.}.
The dpKdV equation can be solved for any of the points on the quad-graph; hence we choose the initial conditions $\mathbf{z}=\{m,n,u_{i0},u_{-j1},u_{0k}| i,j,\in \mathbb{N}, k \in \mathbb{Z}\}$ which are shown by dashed lines in Figure \ref{fig8}.

The densities for the CLaws generated by the Gardner transformation are the functions $F_i$ and $G_i$ in the expansion of 
\begin{align}
 F=&-\ln(u_{10}-u_{01})-\ln\left(1+\frac{1}{u_{10}-u_{01}}  \sum_{i=1}^{\infty}v_{00}^{(i)}\epsilon^i\right)=\sum_{i=0}^{\infty}F_i\epsilon^i, \label{H1F}\\
G=& \ln \epsilon-\ln(u_{00}-u_{20})+\ln \left( 1+\frac{1}{v_{00}^{(1)}}\sum_{i=1}^{\infty} v_{00}^{(i+1)}\epsilon^i \right)=\ln{\epsilon}+\sum_{i=0}^{\infty}G_i\epsilon^i,\label{H1G}
\end{align}
in powers of $\epsilon$; here
\begin{align*}
 v_{00}^{(1)}=\frac{1}{u_{00}-u_{20}}, \qquad v_{00}^{(i)}=\frac{1}{u_{00}-u_{20}}\sum_{j=1}^{i-1}v_{00}^{(j)}v_{10}^{(i-j)},
\end{align*}
and $v_{00}^{(i)}$ is referred to as the $i^{\text{th}}$ order $v$ term.
To prove that the CLaws are distinct the following lemma is used.
\begin{Lemma}\label{inflem}
For each $\alpha\in\mathbb{N}$,
\begin{equation}
 \frac{\partial}{\partial u_{(\alpha+1)0}}\, v_{00}^{(\alpha)} \neq 0 \qquad\mathrm{and}\qquad
\frac{\partial}{\partial u_{(\alpha+j)0}}\, v_{00}^{(\alpha)} = 0, \quad j\geq 2. \label{vlem2}
\end{equation}
\end{Lemma}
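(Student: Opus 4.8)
The plan is to prove both claims in \eqref{vlem2} simultaneously by induction on $\alpha$, exploiting the recursive definition of $v_{00}^{(i)}$. The base case $\alpha=1$ is immediate: $v_{00}^{(1)}=(u_{00}-u_{20})^{-1}$ depends on $u_{20}$ (so $\partial_{u_{20}}v_{00}^{(1)}\neq 0$) and on no $u_{j0}$ with $j\geq 3$, so that $\partial_{u_{(1+j)0}}v_{00}^{(1)}=0$ for $j\geq 2$.

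For the inductive step, I would first establish a \emph{support lemma}: by induction, $v_{00}^{(\alpha)}$ depends only on the points $u_{00},u_{10},\hdots,u_{(\alpha+1)0}$ (i.e.\ on the $\alpha+2$ consecutive horizontal points starting at $u_{00}$). This follows because $v_{10}^{(i)}=S_m v_{00}^{(i)}$ depends on $u_{10},\hdots,u_{(\alpha+2)0}$ whenever $v_{00}^{(i)}$ depends on $u_{00},\hdots,u_{(i+1)0}$, and the recursion $v_{00}^{(\alpha)}=(u_{00}-u_{20})^{-1}\sum_{j=1}^{\alpha-1}v_{00}^{(j)}v_{10}^{(\alpha-j)}$ combines a factor $v_{00}^{(j)}$ supported on $u_{00},\hdots,u_{(j+1)0}$ with a factor $v_{10}^{(\alpha-j)}$ supported on $u_{10},\hdots,u_{(\alpha-j+2)0}$; the highest index appearing is $\max_j(\alpha-j+2)=\alpha+1$ (attained at $j=1$), and the prefactor contributes only $u_{00},u_{20}$. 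This proves the second claim, $\partial_{u_{(\alpha+j)0}}v_{00}^{(\alpha)}=0$ for $j\geq 2$, and simultaneously pins down that $u_{(\alpha+1)0}$ can enter \emph{only} through the single term $j=1$, namely through $v_{10}^{(\alpha-1)}=S_m v_{00}^{(\alpha-1)}$.

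It then remains to show $\partial_{u_{(\alpha+1)0}}v_{00}^{(\alpha)}\neq 0$. Since $u_{(\alpha+1)0}$ appears only in the $j=1$ summand $(u_{00}-u_{20})^{-1}v_{00}^{(1)}v_{10}^{(\alpha-1)}=(u_{00}-u_{20})^{-2}\,S_m v_{00}^{(\alpha-1)}$, we compute
\[
\frac{\partial}{\partial u_{(\alpha+1)0}}v_{00}^{(\alpha)}=(u_{00}-u_{20})^{-2}\,\frac{\partial}{\partial u_{(\alpha+1)0}}\bigl(S_m v_{00}^{(\alpha-1)}\bigr)
=(u_{00}-u_{20})^{-2}\,S_m\!\left(\frac{\partial}{\partial u_{\alpha 0}}v_{00}^{(\alpha-1)}\right),
\]
using that $\partial/\partial u_{(\alpha+1)0}$ applied to $S_m$ of a function equals $S_m$ of $\partial/\partial u_{\alpha 0}$ of that function. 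By the inductive hypothesis (the first claim at level $\alpha-1$), $\partial_{u_{\alpha 0}}v_{00}^{(\alpha-1)}\neq 0$, hence its $S_m$-shift is a nonzero function, and multiplying by the nowhere-zero factor $(u_{00}-u_{20})^{-2}$ keeps it nonzero. This closes the induction.

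\textbf{The main obstacle} I anticipate is the support/index bookkeeping in the inductive step: one must be careful that no \emph{other} summand ($j\geq 2$) secretly reaches $u_{(\alpha+1)0}$ and cancels the contribution of the $j=1$ term. The support lemma rules this out cleanly — for $j\geq 2$ the factor $v_{10}^{(\alpha-j)}$ has top index $\alpha-j+2\leq\alpha$, and $v_{00}^{(j)}$ has top index $j+1\leq\alpha$ (since $j\leq\alpha-1$) — so $u_{(\alpha+1)0}$ genuinely occurs in exactly one place and no cancellation can occur. A secondary point worth stating explicitly is that these are identities of rational functions in independent indeterminates $u_{j0}$, so "$\neq 0$" means "not the zero function," and a product of nonzero rational functions is nonzero; this is what legitimizes the final multiplicativity argument.
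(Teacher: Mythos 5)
Your proof is correct and follows essentially the same route as the paper's: induction on $\alpha$, isolating the $j=1$ summand as the only one reaching $u_{(\alpha+1)0}$, and reducing to $\bigl(v_{00}^{(1)}\bigr)^2 S_m\bigl(\partial v_{00}^{(\alpha-1)}/\partial u_{\alpha 0}\bigr)$. Your explicit ``support lemma'' merely packages the index bookkeeping that the paper handles inline, so the two arguments are the same in substance.
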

\begin{proof}
Proof is by induction; the base case ($\alpha=1$) is immediate. Assume that the lemma holds for $\alpha=k-1$, where $k\ge 2$. For each $i\in \mathbb{N}$,
\begin{align*}
 \frac{\partial}{\partial u_{(k+i)0}}\, v_{00}^{(k)}=&\frac{1}{u_{00}-u_{20}}\left( \sum_{j=1}^{k-1} \left(\frac{\partial}{\partial u_{(k+i)0}}\,v_{00}^{(j)}\right)v_{10}^{(k-j)} \! +\!v_{00}^{(j)}\frac{\partial}{\partial u_{(k+i)0}}\left(S_m v_{00}^{(k-j)}\right)\right).
\end{align*}
The first term in the summation is zero for all $j$, because the highest value $j$ can take is $k-1$ but the lowest value of $i$ is $1$.  Similarly, the second term vanishes for all $j\ge 2$, so only one term is left, namely
\begin{align}
\frac{\partial}{\partial u_{(k+i)0}}\, v_{00}^{(k)}=&\left(v_{00}^{(1)}\right)^2S_m\left(\frac{\partial}{\partial u_{(k+i-1)0}}\,v_{00}^{(k-1)}  \right).\label{vproof}
\end{align}
Using the induction hypothesis, if $i=1$ then \eqref{vproof} is nonzero and if $i>1$, \eqref{vproof} is zero.\end{proof}

\begin{figure}[bt]
\begin{center}
 \includegraphics[trim = 0mm 0mm 0mm 0mm, clip,scale=0.622]{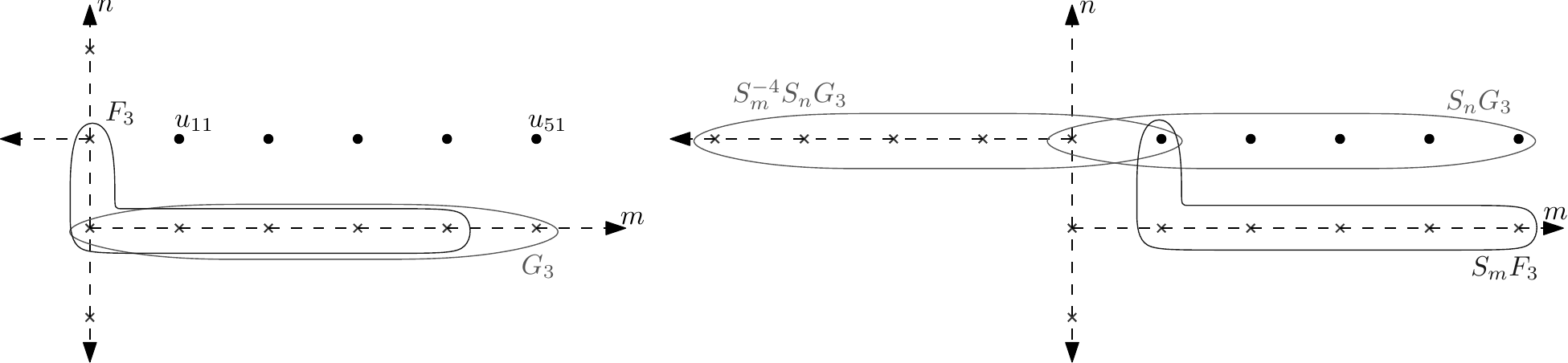}
\caption[A Gardner CLaw of dpKdV and its characteristic ]{The figure on the left shows the densities for the third CLaw in the hierarchy; on the right the extreme shifts of the densities in the characteristic are shown.}\label{fig8}
\end{center}
\end{figure}

A consequence of this lemma is that, for $\alpha\ge 1$
\begin{equation}
 \frac{\partial}{\partial u_{(\alpha+2)0}} \left( \frac{v_{00}^{(\alpha+1)}}{v_{00}^{(1)}} \right)= \frac{1}{v_{00}^{(1)}}\,\frac{\partial}{\partial u_{(\alpha+2)0}} \left( v_{00}^{(\alpha+1)} \right)=v_{00}^{(1)} S_m \left(\frac{\partial}{\partial u_{(\alpha+1)0}}\,v_{00}^{(\alpha)}  \right)  \neq 0.\label{heh}
\end{equation}
As the second factor in \eqref{heh} doesn't depend on $u_{00}$, we obtain
\begin{equation}
\frac{\partial^2}{\partial u_{00}\,\partial u_{(\alpha+2)0}} \left( \frac{v_{00}^{(\alpha+1)}}{v_{00}^{(1)}} \right)\neq 0.\label{muwa1}
\end{equation}
Expanding out \eqref{H1G} shows that, for $\alpha \ge 1$, the highest order $v$ term in $G_\alpha$ is ${v_{00}^{(\alpha+1)}}/{v_{00}^{(1)}}  $; by Lemma \ref{inflem}, this is the only term in $G_\alpha$ to depend on $u_{(\alpha+2)0}$. Therefore $S_n\big( {v_{00}^{(\alpha+1)}}/{v_{00}^{(1)}} \big)  $ is the only term in the CLaw to depend on $u_{(\alpha+2)1}$ and so to depend on $S_m^{\alpha+1}\Delta$ (because $S_mF$ depends only on points in $\zz$ except for $u_{11}$ -- see Figure \ref{fig8}).
Thus the root is 
\begin{equation*}
 E_{\Delta}(S_mF_\alpha + S_nG_\alpha)\big|_{[\Delta]=\mathbf{0}}= \left\{ \frac{\partial S_mF_\alpha}{\partial \Delta}+ \sum_{i=0}^{\alpha} S_m^{-i} \frac{\partial S_nG_\alpha}{\partial S_m^{i}{\Delta}}+ S_m^{-(\alpha+1)}S_n\left(\frac{\partial }{ \partial u_{(\alpha+2)0}}\left( \frac{v_{00}^{(\alpha+1)}}{v_{00}^{(1)}} \right)\right)\right\}\Bigg|_{[\Delta]=\mathbf{0}}.
\end{equation*}
The final term is the only one to depend on $u_{-(\alpha+1),1} $; no other term is shifted as far back. From \eqref{muwa1},
\begin{equation*}
 \frac{\partial}{\partial u_{-(\alpha+1)1}} E_{\Delta}(S_mF\alpha + S_nG_\alpha)|_{[\Delta]=\mathbf{0}}=S_m^{-(\alpha+1)}S_n\left(\frac{\partial^2 }{ \partial u_{00} \partial u_{(\alpha+2)0}}\left( \frac{v_{00}^{(\alpha+1)}}{v_{00}^{(1)}} \right)\right) \neq 0,
\end{equation*}
so this term cannot be a linear combination of the other terms.  Therefore, the characteristic does not vanish on solutions of dpKdV and so the CLaw is nontrivial.  The roots of the lower-order CLaws with densities $(F_1,G_1),\hdots,(F_{\alpha-1},G_{\alpha-1})$ do not depend on $u_{-(\alpha+1)1}  $, so the CLaw $(F_\alpha,G_\alpha)$ cannot be a linear combination of these CLaws and their shifts.  Thus the CLaws generated by the Gardner transformation are distinct.

\section{The converse of Noether's Theorem}\label{secNoet}
One of the most useful significant applications of the root is to establish that the converse of Noether's Theorem holds for difference equations.
Given a Lagrangian, $L$, the Euler--Lagrange equation is
\begin{equation*}
{E}(L):=\sum_{ij} S_m^{-i}S_n^{-j}\left(\frac{\partial L}{\partial u_{ij}} \right)=0.
\end{equation*}
Each symmetry generator for the Euler--Lagrange equation is the prolongation of
\begin{equation}
 X=Q\frac{\partial}{\partial u_{00}}\,;
\end{equation}
the function $Q$ is the \textit{characteristic} of the symmetry generator. 
 In particular, $X$ generates \textit{variational symmetries} if $XL$ is a total divergence, in which case
\begin{equation*}
{E}(XL)\equiv \mathbf{0}.
\end{equation*}
In this case, there exist functions $f$ and $g$ such that
\begin{equation}
 \sum_{ij} \left(S_m^iS_n^j Q\right)\frac{\partial}{\partial u_{ij}}L =  (S_m-I)f + (S_n-I)g.\label{Noet1}
\end{equation}
Summation by parts is then used to rewrite \eqref{Noet1} as
\begin{equation}
 {Q}\cdot{E}(L)= \sum_{i,j} Q S_m^{-i}S_n^{-j}\left(\frac{\partial}{\partial u_{ij}}L \right)=(S_m-I)F+(S_n-I)G,
\end{equation}
for functions $F$ and $G$ whose precise form is irrelevant.
Thus if ${Q}$ is the characteristic of a variational symmetry generator, it is also the characteristic of a CLaw for the Euler--Lagrange equation. Two variational symmetries are equivalent if they differ by a symmetry whose characteristic vanishes on solutions of the Euler--Lagrange equation (i.e.\ a trivial symmetry).  Therefore, if the Euler--Lagrange equation is explicit (in which case the CLaw is trivial if and only only if the root is zero), there is a bijective correspondence between equivalence classes of variational symmetries and CLaws.

\section{Reconstruction of CLaws from roots}\label{reconSec}
If the characteristic of a CLaw is known then the densities for the CLaw can, in principle, be reconstructed using homotopy
operators \cite{MansHydon2004,Olver}.  For PDEs (given an initialization), the root of a CLaw can be calculated by the same approach as we have used; the root is necessarily a characteristic as a consequence of the chain rule. By contrast, the root of a CLaw for a P$\Delta$E may not be a characteristic. So the key step in reconstructing a CLaw from its root is to find a characteristic which has that root.
 
Our starting-point is the proof of Lemma \ref{EulerLem}.  Replacing $f$ by $S_mF$ and using the definition (\ref{euleromega}), we obtain
\begin{equation*}
 F(m+1,n,\uu,[ \omega])= (\omega_{00}-g) \int_{\lambda=0}^{1} \overline{Q}(m,n,\uu,[\omega_\lambda]) \, d\lambda
+ (S_n-I) H(m,n,\uu,[ \omega]) + f(m+1,n,\uu),
\end{equation*}
for some $f$ to be determined.
The $H$ term can be set to zero without loss of generality by adding a trivial CLaw of the second kind, so we can assume that
\begin{equation}
 F(m+1,n,\uu,[ \omega])= (\omega_{00}^{\alpha}-g^\alpha) \int_{\lambda=0}^{1} \overline{Q}(m,n,\uu,[\omega_\lambda]) \, d\lambda + f(m+1,n,\uu).\label{recon1}
\end{equation}
In general, $\overline{Q} $, and as a result \eqref{recon1}, contains negative shifts of $\omega$. These can be removed term-by-term by adding trivial CLaws of the second kind, until one obtains an equivalent density $S_mF$ that has no negative shifts of $\omega$. 

Equation \eqref{recon1} (shifted if necessary, as discussed above) contains all of the $\omega$-dependence of the CLaw. 
Having obtained this, a characteristic for the CLaw is calculated by replacing $\omega_{0j}$ in \eqref{recon1} by $S_n^j\Delta+ \omega_{0j}$. From \eqref{char1},
\begin{equation}
 Q(\zz,[\Delta])=\int_{\lambda=0}^1 E_{\Delta}( F(m+1,n,\uu,[ \Delta + \omega_{00}])) |_{\Delta \mapsto \lambda \Delta} \, d\lambda, \label{eq:charrecon}
\end{equation}
and the CLaw can be written as

\begin{equation*}
 C:=Q(\zz,[\Delta]) \cdot \Delta =  Q\big(\zz, [u_{K0}-\omega(\zz)]\big) \big(u_{K0}-\omega(\zz)\big).
\end{equation*}
Homotopy operators  (see \cite{MansHydon2004})  may then be used to find the densities. Alternatively, once the $\omega$ dependence has been found,
 the arbitrary function $f$ and the other densities can be constructed directly by a variant of the method that we used for O$\Delta$Es (see \cite{HydonCLawsPDiffE} for details). 

\subsection{Example: reconstruction of CLaws of dpKdV}\label{reconsec}

To illustrate this, we will reconstruct two CLaws of dpKdV from their roots.  For both examples, we choose $g(\nn,\uu)=0$, so $\omega_\lambda=\lambda\omega_{00}$. (In practice, we start with this choice and only change it if the integral is singular.)
First, we use the fourth root in Table \ref{table1a}, which gives

\begin{small}
\begin{align}
 F(m+1,n,u_{10},u_{11},\omega_{00})&= \,\omega_{00}\!\int_{\lambda=0}^1\!\!\!\!\! 4 \left( -1 \right) ^{m+1}\! \left\{ (u_{10})^{2}\lambda\,\omega_{00}-\lambda\,\omega_{00}(u_{11})^{2}\!+\alpha u_{11}\!-\!\beta u_{10} \right\} d \lambda + f(m\!+\!1,n,u_{10},u_{11})\nonumber\\
&=\,2 \left( -1 \right) ^{m+1}\! \omega_{00} \left\{ \omega_{00}\left( (u_{10})^{2}-(u_{11})^{2} \right) \!+\!2 \! \left( \alpha u_{11}\!-\!\beta u_{{10}} \right)  \right\} \!+f \left( m+1,n,u_{10},u_{11} \right). \label{reconC4}
\end{align}
\end{small}

\noindent This has no negative shifts of $\omega$ so, from \eqref{recon1} and \eqref{eq:charrecon},  a characteristic is 
\begin{align*}
 Q(\zz,\Delta)=&\int_{\lambda=0}^1 \!\!\left.\frac{\partial}{\partial \Delta}\left\{2 \left( -1 \right) ^{m+1}\! (\Delta+\omega_{00}) \left(( \Delta+\omega_{00})( {u_{10}}^{2}-{u_{11}}^{2} ) \!+\!2 \! \left( \alpha u_{11}\!-\!\beta u_{{10}} \right)  \right)\right\} \right|_{\Delta\mapsto\lambda\Delta} \!\!\!\!\!\! d\lambda,\\
=&2 \left( -1 \right) ^{m+1}\! \left((\Delta+ 2\omega_{00})( {u_{10}}^{2}-{u_{11}}^{2} ) \!+\!2 \! \left( \alpha u_{11}\!-\!\beta u_{{10}} \right)  \right) 
\end{align*}
Thus
\begin{align}
 C:=&Q\Delta=2 \left( -1 \right) ^{m+1} \left( u_{{21}}-\omega_{00}
 \right)\!  \left((u_{21}+ \omega_{00})( {u_{10}}^{2}-{u_{11}}^{2} ) \!+\!2 \! \left( \alpha u_{11}\!-\!\beta u_{{10}} \right)  \right)  \nonumber\\
=&2 \left( -1 \right) ^{m} \!\left(\!\beta\!+\!\alpha\! -u_{{21}}u_{10}\!-\!u_{00}u
_{10}\!-\!u_{{21}}u_{11}\!-\!u_{00}u_{11} \right)  \left( u_{{21}}u_{10}\!-\!u_{{21}}u_{11}\!-\!u_{00}u_{10}\!+\!u_{00}u_{11}\!-\!\beta\!+\!\alpha \right).\label{Cform}
\end{align}
This is a total divergence, so applying the homotopy operator from \cite{MansHydon2004} 
gives the required densities
\begin{align*}
F=&\ \frac{1}{6}\left( -1 \right) ^{m+1} \left(4u_{10}u_{0-1}\beta+ 20 u_{00}\beta\,u_{11}
+4u_{-11}\alpha u_{01}+2u_{-10}u_{01}\beta-10
{u_{00}}^{2}{u_{11}}^{2} \right) \\&+\frac{1}{6}\left( -1 \right) ^{m+1}   \left({u_{00}}^{2}{u_{-1-1}}^{2
}-2 u_{00}\beta u_{-1-1}-4 u_{00}u_{-10}\alpha-10 u
_{00}u_{10}\alpha +2
{u_{-10}}^{2}{u_{00}}^{2}\right)  \\   &+\frac{1}{6}\left( -1 \right) ^{m+1}\left( 7
{u_{01}}^{2}{u_{11}}^{2}-14\alpha u_{11}u_{01}-{u_{{-
10}}}^{2}{u_{01}}^{2}-2{u_{10}}^{2}{u_{0-1}}^{2}-2{
u_{01}}^{2}{u_{-11}}^{2}+5{u_{00}}^{2}{u_{10}}^{2}\right),\\ 
G=&\ \frac{1}{6} \left( -1 \right) ^{m} \left( -10\,u_{10}u_{20}\alpha+
2{u_{-10}}^{2}{u_{00}}^{2}-{u_{10}}^{2}{u_{0-1}}^{2}
-5{u_{00}}^{2}{u_{10}}^{2}\right)\\ &+\frac{1}{6} \left( -1 \right) ^{m}   \left(-2 u_{00}\beta u_{-1-1}+
10u_{00}u_{10}\alpha+{u_{00}}^{2}{u_{-1-1}}^{2}+2 u
_{10}u_{0-1}\beta\right) \\ 
&+\frac{1}{6} \left( -1 \right) ^{m}  \left(4u_{20}u_{1-1}\beta\!+\!5{u_{{
10}}}^{2}{u_{20}}^{2}\!\!-\!2{u_{20}}^{2}{u_{1-1}}^{2}\!\!-\!4
u_{00}u_{-10}\alpha\!-\!12n{\beta}^{2}+12n{\alpha}^{2}
 \right).
\end{align*}
These densities contain points of the form $u_{-1j}$ and $u_{2j}$. To find equivalent densities that are given solely in terms of $\zz$, one must shift the CLaw forwards and then use the dpKdV equation \eqref{dKdVtrans} to pull all terms back onto the initial conditions; this leads to even longer expressions!

In practice it is much easier to use the direct construction method, which leads to more compact expressions for the densities. Starting from \eqref{reconC4}, we know that the densities are of the form
\begin{align*}
F=&2 \left( -1 \right) ^{m} u_{11}\! \left( u_{11} ( {u_{00}}^{2}-{u_{01}}^{2} )\! +2\left( \alpha u_{01}\!-\!\beta u_{00} \right)  \right) \!+\!f \left( m,n,u_{00},u_{01} \right)\!,\\
G=&G(m,n,u_{00},u_{10}).
\end{align*}
Substituting these into the CLaw and evaluating the result on solutions gives 
\begin{align*}
 0=C|_{\Delta=0}=&2 \left( -1 \right) ^{m+1} \left( \!u_{00}\!+\!{\frac {\beta\!\!-\!\!\alpha}{u_{10}\!\!-\!\!u_{11}}}\!\right)\! \left( \left( \!u_{00}\!+\!{\frac {\beta\!\!-\!\!\alpha}{u_{10}\!\!-\!\!u_{11}}}\!\right) ( {u_{10}}^{2}-{u_{11}}^{2} )\! +2\left( \alpha u_{11}\!-\!\beta u_{10} \right)  \right) \\&-2 \left( -1 \right) ^{m} u_{11}\! \left( u_{11} ( {u_{00}}^{2}-{u_{01}}^{2} )\! +2\left( \alpha u_{01}\!-\!\beta u_{00} \right)  \right)+f\left( m+1,n,u_{10},u_{11} \right) \\
 & -f\left( m,n,u_{00},u_{01} \right) +G \left( m,n+1,u_{01},u_{11} \right) -G \left( m,n,u_{00},u_{10} \right).
\end{align*}
Differentiating this expression, we obtain
\begin{equation*}
 0=\frac{\partial^2 C|_{\Delta=0}}{\partial u_{00} \partial u_{01}}=\frac{\partial^2 f}{\partial u_{00} \partial u_{01}}\,.
\end{equation*}
Consequently, $f=\tilde{f}(m,n,u_{00})+h(m,n,u_{01})$; however, $h$ can be set to zero by adding a trivial CLaw.
Therefore
\begin{equation*}
 0=\frac{\partial C|_{\Delta=0}}{ \partial u_{01}} = 4 \left( -1 \right) ^{m}{u_{11}}^{2}u_{01}+4 \left(-1\right)^{m+1}u_{11}\alpha
+\frac{\partial}{\partial u_{01}}  G \left( m,
n+1,u_{01},u_{11} \right), 
\end{equation*}
and so
\begin{equation*}
G \left( m,n,u_{00},u_{10} \right) =2 \left( -1 \right) ^{m+1
}{u_{10}}^{2}{u_{00}}^{2}+4\, \left( -1 \right) ^{m}u_{{10}
}\alpha\,u_{00}+g\left( m,n,u_{10} \right).
\end{equation*}
Dropping the tilde,
\begin{equation*}
 0=\frac{\partial C|_{\Delta=0}}{\partial u_{00}} = -\frac{\partial f}{\partial u_{00}},
\end{equation*}
which yields $f=f(m,n)$.
The final differentiation is
\begin{equation*}
 0=\frac{\partial C|_{\Delta=0}}{\partial u_{10}}=-\frac{\partial g}{\partial u_{10}},
\end{equation*}
so $g=g(m,n)$. The CLaw now simplifies to
\begin{equation*}
 (S_m-I)f(m,n) +(S_n-I)g(m,n) =2 \left( -1 \right) ^{m}({\alpha}^{2}-{\beta}^{2}),
\end{equation*}
a solution of which is $f=0$ and $g=2n(-1)^m(\alpha^2-\beta^2)$.  So the reconstructed densities are
\begin{align*}
 F=& 2 u_{11} \left( -1 \right) ^{m} \left( u_{11}({u_{{00}}}^{2}-{u_{01}}^{2})+2\alpha\,u_{01}-2\beta
\,u_{00} \right) ,\\
G=&2 \left( -1 \right) ^{m} \left( -{u_{10}}^{2}{u_{00}}^{2}+2
\,u_{10}\alpha\,u_{00}+n({\alpha}^{2}-{\beta}^{2}) \right),
\end{align*}
which are equivalent to the densities found by the homotopy method, as they have the same root.

For a more complicated example, consider the densities of the sixth Claw in Table \ref{table1b}.
Its root depends on $\omega_{0-1}$, $\omega_{00}$ and $\omega_{01}$.  The characteristic is constructed so that terms depending on $\omega_{0-1}$ do not depend on $\omega_{01}$. The term that depends on $\omega_{0-1}$ is
\begin{equation*}
h_1(m,n,\mathbf{u_1},\omega_{0-1},\omega_{00}):= \left( \omega_{0-1}\!-\!\omega_{00} \right) ^{-1}\!-\!
 \left( \beta\!-\!\alpha \right)\!  \left( \omega_{0-1}\!-\!\omega_{00}
 \right) ^{-2}\! \left(\! u_{10}+{\frac {\beta\!-\!\alpha}{\omega_{0-1}\!-\!
\omega_{00}}}-u_{1-1}\! \right) ^{-1}\!\!\!\!.
\end{equation*}
 Then let
\begin{align*}
h_2(m,n,\mathbf{u_1},\omega_{00},\omega_{01}):=&\, \omega_{00} \overline{Q}_6(m,n,\mathbf{u_1},[\lambda\omega]) + (S_n-I)\left(\omega_{00} h_1 (\nn,\mathbf{u_1},[\lambda\omega]) \right)\\=&-{\frac {\omega_{00}u_{10}-\omega_{01}u_{10}-\omega_{{00}
}u_{11}+\omega_{01}u_{11}}{-\lambda\,\omega_{00}u_{11}+
\lambda\,\omega_{00}u_{10}-\lambda\,\omega_{01}u_{10}-
\beta+\lambda\,\omega_{01}u_{11}+\alpha}}\,,
\end{align*}
which is a function that only depends on $\omega_{00}$ and $ \omega_{01}$; this is integrated with respect to $\lambda$ to obtain
\begin{align}
S_mF|_{[\Delta]=0}=&\int h_2\, \mathrm{d} \lambda + f(m+1,n,u_{10},u_{11},u_{12})\nonumber\\
=&-\ln  \left( {\frac {(\omega_{01}-\omega_{00})(u_{11}-u_{10})+\alpha-\beta}{
\alpha-\beta}} \right) + f(m+1,n,u_{10},u_{11},u_{12}).\label{TheAnswer}
\end{align}
As in the last example, we could calculate the characteristic from \eqref{TheAnswer} and use the homotopy operator to find densities for the CLaw.  Once again, however,  the direct construction method is preferable.
By shifting \eqref{TheAnswer} and choosing $G$ to depend on appropriate values of $\zz$, the densities have the form
\begin{align*}
F=& -\!\ln\!  \left( {\frac {(u_{12}-u_{11})(u_{01}-u_{00})+\alpha-\beta}{\alpha-\beta}} \right)\! +\!f
 \left( m,n,u_{00},u_{01},u_{{02}} \right)\!,\\
G=&G \left( m
,n,u_{00},u_{10},u_{01},u_{11} \right).
\end{align*}
 The dependence on the dpKdV equation is already determined by \eqref{TheAnswer}, so all that remains is to find $f$ and $G$. The same process (differential elimination followed by integration) is used as before. Skipping the details, we obtain
\begin{equation*}
 F= -\!\ln\!  \left( {\frac {(u_{12}-u_{11})(u_{01}-u_{00})+\alpha-\beta}{\alpha-\beta}} \right),\qquad G= -\ln  \left( u_{10}-u_{11} \right),
\end{equation*}
as required.

\section{Finding CLaws}\label{examplesec}
\subsection{The Adjoint of the Linearized Symmetry Operator}\label{adjsec}
The G\^{a}teaux derivative of a functional $P$ is the operator
 defined in \cite{MansHydon2004} by
\begin{align*}
 \DD_P(Q)=&\lim_{ \epsilon \to 0 } \left( \frac{P\left[u+\epsilon Q[u]\right] -
P[u]}{\epsilon} \right)=\left\{\frac{d}{d \epsilon} P\left[u+\epsilon Q[u]\right]\right\}\Big|_{\epsilon =
0}.
\end{align*}
Explicitly, the G\^{a}teaux derivative of $P$ is the
shift operator with entries 
\begin{equation*}
  \DD_P=\sum_{ij}\frac{\partial P}{\partial u_{ij}}\,
S_m^iS_n^j.
\end{equation*}
Therefore its adjoint with respect to the $\ell_2$ inner product is 
\begin{equation*}
  \DD^*_P=\sum_{i,j}\left(S_m^{-i}S_n^{-j}\frac{\partial P}{\partial
u_{ij}}\right) S_m^{-i}S_n^{-j},
\end{equation*}
and so
the Euler operator, $E$, is defined by 
\begin{equation*}
E(P[u])= \DD^{*}_{P}(1),
\end{equation*}
just as for PDEs. Using the Leibniz rule,
\begin{equation}
 E(P\cdot Q)= D^{*}_{P\cdot Q} (1) = D^{*}_{P} (Q) + D^{*}_{ Q} (P).
\end{equation}
The action  of the vector field $X= Q\, {\partial}/{\partial
u_{00}}$ on the functional $P$ is
\begin{equation*}
 \mbox{pr } X (P) = \DD_P(Q).
\end{equation*}
So the linearized
symmetry condition for a given difference equation, $\Delta={0}$, is
\begin{equation*}
 0=\left.\DD_{\Delta}(Q)\right|_{[\Delta]={0}}.
\end{equation*}
The Euler operator acting on a expression is zero if and only if that expression
is a total divergence \cite{MansHydon2004}.  Therefore 
 $Q$ is a 
characteristic of a CLaw if and only if
\begin{equation*}
 0=E(Q\cdot \Delta)= \DD^{*}_{\Delta}(Q) + \DD^{*}_{Q}(\Delta).
\end{equation*}
Restricting this to solutions of the difference equation gives a necessary
condition for $Q$ to be a characteristic:
\begin{equation}
 0=\left.\DD^{*}_{\Delta}(Q)\right|_{[\Delta]={0}}.\label{AdjResult}
\end{equation}
In other words, the characteristics are members of the kernel of the adjoint of
the linearized symmetry condition (ALSC), restricted to solutions.
Arriola \cite{Arriola} showed that \eqref{AdjResult} must be satisfied for first integrals of autonomous ordinary difference equations.   In a notable paper that introduces the idea of co-recursion operators for integrable difference equations \cite{XenitidisCoSym10}, Mikhailov \textit{et al.} define a \textit{cosymmetry} of a difference equation as being a member of the kernel of the ALSC.  They state (parenthetically) that cosymmetries are characteristics of CLaws, but do not justify this.
\begin{table}[t]
\begin{center}
\caption{Solutions of the ALSC for the potential Lotka--Volterra equation}\label{tablepLVq}
\begin{tabular}{l}
\hline
 $\overline{Q}_1={\frac { \left( -n+2 \right) u_{10}}{u_{0-1}u_{00}}}+{\frac {n-1}{u_{00}}}+{\frac {nu_{02}u_{10}}{{u_{00}}^{2} \left( u_{10}+u_{01} \right) }}
 $\T\B\\
$\overline{Q}_2= \frac{1}{u_{00}}+{\frac {u_{02}u_{10}}{{u_{00}}^{2}
 \left( u_{10}+u_{01} \right) }}-{\frac {u_{10}}{u_{0-1}u_{00}}}
  $\T\B\\
$\overline{Q}_3=-{\frac {mu_{10}u_{-10}}{  u_{01}u_{00}\left( u_{10}+u_{-10} \right)}}
-{\frac { \left( m+1 \right) u_{20}}{  u_{00}\left( u_{20}+u_{00} \right) \left( u_{10}+u_{01} \right) 
}}   $\T\B\\
$\overline{Q}_4= -{\frac {u_{10}u_{-10}}{ u_{01}u_{00} \left( u_{10}+u_{-10} \right)}}-{\frac {u_{10}u_{20}}{ u_{00}\left( u_{20}+u_{00} \right)  \left( u_{10}+u_{01} \right) }}
  $\T\B\\
$\overline{Q}_5= {\frac { \left( -1 \right) ^{m+n} \left( {u_{10}}^{2}u_{01}+{u_{00}}^{2}(u_{10}+u_{01}) \right) }{{u_{00}}^{2}u_{01} \left( u_{10}+u_{01} \right) }}
 $\T\B\\
$\overline{Q}_6= {\frac {{u_{10}}^{2}u_{01}-{u_{00}}^{2}(u_{10}+u_{01})}{{u_{00}}^{2} u_{01}\left( u_{10}+u_{01}
 \right) }}
  $\T\B\\
 \hline 
\end{tabular}\end{center}\end{table}

In this section, we give an example of a P$\Delta$E where not every cosymmetry is a characteristic of a CLaw (see \cite{AncoBlumanPartI,AncoBlumanPartII} for a discussion of this point for differential equations).  First, we find functions $Q$ that satisfy \eqref{AdjResult}, using methods similar to those used to find symmetries of difference equations.  Then we find additional constraints on $Q$ by applying the difference Euler operator to $Q\Delta$.  Finally, we reconstruct the densities using homotopy operators or inspection. For brevity, we omit most details of the calculations.

\subsection{CLaws of the Potential Lotka-Volterra equation}\label{sec:findingroots}

The potential Lotka--Volterra equation (pLV)
\begin{equation}
 \frac{u_{11}}{u_{00}}-\frac{u_{01}}{u_{10}}=1. \label{eq:pLV}
\end{equation}
is an integrable equation on the quad-graph. (It belongs to Class 4 of Hietarinta \& Viallet's classification of quadratic quad-graph equations with polynomial degree growth \cite{hv}, and is a potential form of the discrete Lotka-Volterra equation introduced by Hirota \& Tsujimoto \cite{ht}.) 

Rasin \& Hydon's method for finding symmetries of quad-graph equations \cite{RasinSym2007} is readily adapted to find solutions of the ALSC, shifted (for convenience) to
\begin{equation}
0=\left\{\left(\overline{Q}-S_n\left(\omega_{,2}\overline{Q}\right)-S_m\left(\omega_{,3}\overline{Q} \right) - S_mS_n\left(\omega_{,1} \overline{Q}  \right)\right)\right\}\big|_{[\Delta]=0}. \label{ALSCcon}
\end{equation}
We search for solutions of \eqref{ALSCcon} which are pulled back onto the initial conditions $\zz=\{m,n,u_{i0},u_{0j}\}$.
In particular, we will look for roots of the form
\begin{equation}
 \overline{Q} = \overline{Q}(m,n,u_{-10},u_{0-1},u_{00},u_{10},u_{01},u_{20},u_{02}).\label{eq:rootguess}
\end{equation}
The corresponding CLaws depend on $m,n,u_{-10},u_{0-1},u_{00},u_{10}$ and $u_{01}$ only, and are therefore called `five-point CLaws'.

By differential elimination and integration, one obtains the solutions of the ALSC; these are listed in Table \ref{tablepLVq}. However, when one tries to use these roots to construct the corresponding CLaws of the pLV equation, the algorithm fails for $\overline{Q}_1$ (see Table \ref{CLawspLV}). It turns out that $E(Q \Delta)\neq0$, and therefore this cosymmetry does not correspond to the characteristic of a CLaw. This is not surprising, as \eqref{AdjResult} is necessary but not sufficient; nevertheless, to the best of our knowledge, this is the first example of such a cosymmetry of a P$\Delta$E. It raises  an interesting question: can an additional constraint be found that guarantees a solution of the ALSC is a root of a CLaw without the need to work through the (lengthy) process of reconstructing the characteristic? For instance, Mikhailov \textit{et al.} \cite{XenitidisCoSym10} have used a co-recursion operator to generate an infinite hierarchy of cosymmetries for the Viallet equation, so a simple test to show that these produce an infinite hierarchy of CLaws would be useful.

\begin{small}
\begin{table}[t]
\begin{center}
\caption{Five-point CLaws of the potential Lotka--Volterra equation}\label{CLawspLV}
\begin{tabular}{l}
 \hline\\[-2ex]
The solution $\overline{Q}_1$ of the ALSC is not the root of a characteristic.\\
\hline\\[-2ex]
$F_2={\frac { \left(u_{-\!10}- u_{01} \right)  \left( u_{0-\!1}+u_{-\!10} \right) }{u_{0-\!1}u_{-10}}},\qquad G_2= -{\frac {u_{-\!10}}{u_{0-\!1}}}
  $\\[1ex]
\hline\\[-2ex]
$F_3= m\ln  \left( {\frac {u_{01}}{u_{-\!10}}} \right) -\ln  \left( {u_{00}} \right),\qquad G_3=\left( m+1 \right) \ln  \left( {\frac {u_{10}+u_{-\!10}}{u_{10}}} \right) 
  -\ln   \left( u_{10}+u_{-\!10}
 \right) 
  $\\[1ex]
\hline\\[-2ex]
$F_4= \ln  \left( {\frac {u_{01}}{u_{-\!10}}} \right),\qquad G_4= \ln \left( {\frac {  u_{10}+u_{-\!10} }{u_{10}}}\right) 
  $\\[1ex]
\hline\\[-2ex]
$F_5= {\frac { \left( -1 \right) ^{m+n}u_{00}}{u_{01}}},\qquad G_5= {\frac { \left( -1 \right) ^{m+n} \left( {u_{00}}^{2}-{u_{10}}^{2} \right) }{u_{00}u_{10}}}
  $\\[1ex]
\hline\\[-2ex]
$F_6= {\frac {2\,u_{01}-u_{00}}{u_{01}}},\qquad G_6= -{\frac {{u_{00}}^{2}+{u_{10}}^{2}}{u_{00}u_{10}}}  $\\[1ex]
\hline
\end{tabular}\end{center}\end{table}
\end{small}

\section{Acknowledgements}
We thank the Natural Environment Research Council for funding this research. We also thank the referees for their very helpful recommendations.

\end{document}